\newcommand{\clrr}{\relax}
\newtheorem{theorem}{Theorem}[section]
\newtheorem{lemma}[theorem]{Lemma}
\newtheorem{definition}[theorem]{Definition}
\newtheorem{remark}[theorem]{Remark}
\newtheorem{example}[theorem]{Example}
\newtheorem{assumption}[theorem]{Assumption}
\numberwithin{equation}{section} \numberwithin{theorem}{section}
\DeclareMathOperator{\Lip}{Lip}
\newcommand{\dohat}{^{\wedge}}
\newcommand{\unhat}{^{\vee}}
\newcommand{\PP}{\mathbb P}
\newcommand{\QQ}{\mathbb Q}
\newcommand{\TT}{\mathbb T}
\newcommand{\RR}{\mathbb R}
\newcommand{\del}{\delta}
\newcommand{\eps}{\varepsilon}
\newcommand{\fr}[1]{\frac1{#1}}
\newcommand{\tfr}[1]{\tfrac1{#1}}
\newcommand{\half}{{\mathchoice{\tfrac{1}{2}}{\frac{1}{2}}%
{\frac{1}{2}}{\frac{1}{2}}}}
\newcommand{\ddt}{\frac{d \hfill}{dt}}
\newcommand{\tddt}{\tfrac{d \hfill}{dt}}
\newcommand{\eqdef}{\mathrel{:=}}
\newcommand{\prt}{\partial}
\newcommand{\grad}{\nabla}
\newcommand{\gradu}{\grad_{\!u}}
\newcommand{\dd}[1]{\frac{d\hfill}{d#1}}
\newcommand{\eval}[1]{{\big|_{#1}}}
\newcommand{\ip}[2]{\left<#1,#2\right>}
\newcommand{\alp}{\alpha}
\newcommand{\lb}{\left[}
\newcommand{\rb}{\right]}
\newcommand{\la}{\left|}
\newcommand{\ra}{\right|}
\newcommand{\lB}{\left\{}
\newcommand{\rB}{\right\}}
\newcommand{\lp}{\left(}
\newcommand{\rp}{\right)}
\newcommand{\ppp}[2]{\frac{\prt #1\hfill}{\prt #2\hfill}}
\newcommand{\tppp}[2]{\tfrac{\prt #1\hfill}{{\prt #2}\hfill}}
\newcommand{\fakeq}{\phantom{=\;}} 
\newcommand{\vertiii}[1]{{\left\vert\mkern-1.5mu\left\vert\mkern-1.5mu\left\vert #1 
    \right\vert\mkern-1.5mu\right\vert\mkern-1.5mu\right\vert}}
\newcommand{\vertiv}[1]{{\left\vert\mkern-1.5mu\left\vert\mkern-1.5mu\left\vert\mkern-1.5mu\left\vert #1 
    \right\vert\mkern-1.5mu\right\vert\mkern-1.5mu\right\vert\mkern-1.5mu\right\vert}}
\newcommand{\pseudo}{\psi} 
\newcommand{\alfven}{Alfv\'en\ }
\newcommand{\deA}{\del_\textnormal{\tiny A}}
\newcommand{\epM}{\eps_\textnormal{\tiny M}}
\newcommand{\ccdot}{{\hspace{-0.15mm}\cdot }}
\newcommand{\nc}{\nabla \ccdot }
\newcommand{\vu}{{\mathbf u}}
\newcommand{\cn}{ \!\cdot\!\! \nabla}
\newcommand{\sqkel}{\sqrt{k^2+4\eps^2\ell^2}}
\def\at{\newline}
\def\and{\;\&\;}
\newcommand{\eemail}[1]{\newline\textit{E-mail}: #1}
\begin{document}
\title[{Three-scale Singular Limit of Evolutionary PDEs}]{Three-Scale Singular Limits of Evolutionary PDEs}
\author{Bin Cheng \and   Qiangchang Ju  \and Steve Schochet}
%
%
\date{14-04-2019\newline
B. Cheng \at
Department of Mathematics, University of Surrey, Guildford, GU2 7XH, United Kingdom
              \eemail{b.cheng@surrey.ac.uk}  
           \newline           
           Q. Ju 
           \at
         Institute of Applied Physics and Computational Mathematics, P.O. Box 8009, Beijing 100088, China
           \eemail{ju\_qiangchang@iapcm.ac.cn}
          \newline 
        S. Schochet \at School of Mathematical Sciences, Tel-Aviv University, Tel Aviv 69978, Israel 
     \eemail{schochet@tauex.tau.ac.il}
}
\maketitle

\begin{abstract}
  Singular limits of a class of evolutionary systems of partial differential equations having two
  small parameters and hence three time scales are considered. Under appropriate conditions
  solutions are shown to exist and remain uniformly bounded for a fixed time as the two parameters
  tend to zero at different rates. A simple example shows the necessity of those conditions in order
  for uniform bounds to hold. Under further conditions the solutions of the original system tend to
  solutions of a limit equation as the parameters tend to zero.

\keywords{Keywords: singular limit  \and multiple time scales. 35B25, 35L45. }
\end{abstract}

\section{Introduction}
Many physical systems contain several small parameters, such as the Mach number, \alfven number,
Froude number, Rossby number, etc. When these parameters are considered to have fixed ratios to one
another then the system has two time scales, one induced by the terms containing the small
parameters and the other coming from the order-one terms in the equation. The classical theory of
singular limits for evolutionary partial differential equations (PDEs)
(\cite{klainerman81:singlim,mr665380,Majdabook84,schochet88:asympt,scho94f,mr1459589,MR1661025} and
numerous papers on particular systems, e.g. \cite{metivier01:euler}) was developed to treat this
case.  In order to determine the behavior of solutions when two physical parameters tend to zero in
a different manner it is necessary to develop an analogous theory for systems with three time
scales. The systems to be considered here have the form
\begin{equation}
  \label{eq:3pde}
  A_0(\eps u)u_t+\sum_{j=1}^d A_j(u)u_{x_j}+\tfr{\del}\mathcal L u+\tfr{\eps}\mathcal Mu=F(t,x,u),
\end{equation}
where $\eps$ and $\del$ are small parameters.
As in the theory of two-scale singular limits, the system without the large terms is assumed to be
symmetric hyperbolic, and $\mathcal L$ and $\mathcal M$ are assumed to be antisymmetric
constant-coefficient differential or pseudodifferential operators of order at most one. As for two-scale
singular limits \cite{klainerman81:singlim,MR1661025}, parabolic terms of size $O(1)$ could be added to the
right side of~\eqref{eq:3pde}, although the complications such terms induce would be greater in the
three-scale case.

The fundamental discovery of Klainerman and Majda \cite{klainerman81:singlim,klainerman82:compress}
for two-scale singular limits was that the presence of the small parameter in the matrix~$A_0$,
which occurs naturally in the normalized equations for low Mach number fluid flow, induces a
delicate balance. As they showed, this ensures that solutions of \eqref{eq:3pde} with $\del=1$,
having fixed initial data belonging to a Sobolev space of sufficiently high index, exist for a time
independent of the small parameter~$\eps$ and satisfy bounds independent of that parameter, without
the need for additional conditions on the large terms or the initial data, such as those assumed in
\cite{mr665380,schochet88:asympt} to treat the case when $A_0$ depends on $u$ rather than $\eps
u$. Whenever the small parameter~$\del$ in \eqref{eq:3pde} is not asymptotically smaller
than $\eps$, i.e., when $\del\ge c\eps$ for some arbitrarily small positive constant~$c$, then the
Klainerman-Majda balance is essentially preserved and their uniform existence result remains valid
and requires only cosmetic changes to the proof. Similarly, when $A_0$ is a constant matrix, as in
the rotating shallow {\clrr  water} equations (\cite[Equation (2.2)]{majda03:_system_multis_model_tropic}) then the
Klainerman-Majda uniform existence result remains valid for arbitrary $\del$ and $\eps$.

Hence we will be concerned here with the case when $A_0$ does depend nontrivially on $\eps u$, and 
\begin{equation}\label{eq:delsmall}
0<\delta\ll \eps\ll 1.
\end{equation}
Our first main result is a uniform existence theorem under two additional assumptions: The first condition
is that 
\begin{equation}
  \label{eq:deleps}
  \del\ge c\,\eps^{1+\fr{s_0}}
\end{equation}
for some positive constant~$c$, where 
\begin{equation}
  \label{eq:s0}
  s_0\eqdef \lfloor\tfrac d2\rfloor+1
\end{equation}
is the Sobolev embedding exponent in dimension~$d$. The second condition is that the initial
data~$u_0(x,\eps,\del)$ are uniformly bounded in the Sobolev space~$H^{s_0+1}(\mathbb{D})$ and are
``well-prepared'' in the usual sense that the initial time derivative
\begin{equation}\label{eq:ut0}
u_t(0,x)\eqdef A_0(\eps u_0)^{-1}\lb F({\clrr  0},x,u_0)-\sum_{j=1}^d
A_j(u_0)(u_0)_{x_j}-\tfr\del\mathcal Lu_0-\tfr\eps\mathcal Mu_0\rb
\end{equation}
is uniformly bounded in $H^{s_0}(\mathbb{D})$, with the domain $\mathbb{D}$ being either the whole
space $\mathbb{R}^d$ or the torus $\mathbb{T}^d$. {\clrr Examples of initial data satisfying this condition
are given in \eqref{eq:u0} below.} For convenience, we shall {\clrr henceforth} omit the spatial domain
in integrals and function spaces throughout the paper.
Although \eqref{eq:deleps} limits how small $\del$ can be
compared to~$\eps$, it is consistent with the scaling~\eqref{eq:delsmall} that violates the
Klainerman-Majda balance. Moreover, both conditions are necessary, at least for obtaining uniform
bounds on solutions of general systems without Klainerman-Majda balance, as will be shown via 
a simple explicit example.

Our other main result is a convergence theorem showing, under the additional assumptions described
below, that as $\eps$ and $\del$ both tend to zero solutions of \eqref{eq:3pde} whose initial data
converge in $H^{s_0+1}$ tend to the solution of a certain limiting equation. The framework of the
convergence theorem is the same as for two-scale singular limits: The bounds of the existence result
yield compactness, which implies that every sequence of $\eps$ and $\del$ tending to zero while
obeying \eqref{eq:deleps} has a subsequence for which the solution converges, and convergence
without restricting to such subsequences is obtained by showing that the limit satisfies a limit
equation for which solutions of initial-value problems are unique. However, both the form and the
derivation of the limit equation are more complicated for three-scale singular limits. For the
two-scale singular limit obtained when $\del\equiv1$, the limit equation is obtained by decomposing
\eqref{eq:3pde} into the projections onto the null space of $\mathcal M$ and onto its orthogonal
complement, multiplying the latter by $\eps$ and taking the limits of the results. However, in order
to obtain the limit equation for the three-scale singular limit in which
\eqref{eq:delsmall}\textendash\eqref{eq:deleps} holds it is necessary to use perturbation theory to
compute some number of terms of the 
power series in the small parameter $\mu=\frac{\del}{\eps}$ 
for the eigenvalues and eigenspaces of $\mathcal L+\mu
\mathcal M$ in Fourier space.
The number of
terms required and the resulting limit equation depend on the relationship between $\del$ and $\eps$ as
they both tend to zero. In order to obtain convergence without restricting to subsequences it is
necessary to restrict the relationship between $\del$ and $\eps$ so as to obtain a specific limit
equation. This requires the additional assumption that
for some integer $s\ge
s_0$ either
\begin{equation}
  \label{eq:delepsrel1}
\frac{\del}{\eps^{1+\fr s}}\to C>0 \qquad\text{as $\eps$ and $\del$ tend to zero}
\end{equation}
or
\begin{equation}
  \label{eq:delepsrel2}
\frac{\del}{\eps^{1+\fr {s}}}\to \infty \qquad\text{and}\qquad \frac{\del}{\eps^{1+ \fr{s+1}}}\to 0 
\qquad\text{as $\eps$ and $\del$ tend to zero,}
\end{equation}
either of which implies \eqref{eq:delsmall}\textendash\eqref{eq:deleps} hold.  Note that if
$\frac{\del}{\eps^{1+\fr r}}\to C>0$ for some $r>s_0$ that is not an integer then
\eqref{eq:delepsrel2} holds with $s=\lfloor r\rfloor$. The limit equation is different for different
values of $s$ and even for different values of $C$ in \eqref{eq:delepsrel1}, but is the same for all
$r$ in $(s,s+1)$.  The reason why the limit equation depends on $C$ is that when
\eqref{eq:delepsrel1} holds then the limit equation contains a term~{\clrr $T_{\lim}$} arising from the 
power series expansion in $\delta$
of
$\fr{\del}(\mathcal L+\mu\mathcal M)$. Moreover, although both $\mathcal L$ and $\mathcal M$ are both
bounded operators from $H^1$ to $L^2$ {\clrr it turns out that $T_{\lim}$} 
may not be, {\clrr  as will be explained in Definition~\ref{def:proj} and
  Remark~\ref{rem:tlim} below.} Such
terms do not occur in two-scale singular limits.  As a result, the second time derivative of the
limit solution may not belong to $L^2$, although the limit process ensures that its first time
derivative does belong to $L^2$.

After presenting the example showing the necessity of our conditions for obtaining uniform bounds in
\S2, the uniform existence theorem will be formulated precisely and proven in \S3, and the 
convergence theorem will
be formulated precisely and proven in \S4. Some simple examples of the perturbation procedure and the limit
equations will also be presented in that section. 
In forthcoming work the results here will be applied to the problem that
motivated this research, namely the simultaneous zero \alfven number and zero Mach
number limit of the scaled compressible inviscid MHD equations \def\Sem{S_{\epM}} \def\vb{{\mathbf
    b}} \def\pz{\prt_z} \def\vez{{\mathbf e_z}}
\begin{subequations}\label{MHDsy}
\begin{align}
\label{MHDsy:r}a(1+\epM r)\,\big(&\prt_t r+\vu\cn r)
+R(r,\epM) \nc \vu+\epM^{-1}\nc\vu=0 ,\\
\label{MHDsy:vu}(1+\epM r)\big(&\prt_t\vu+\vu\cn \vu\big)+R(r,\epM)\grad r
+\epM^{-1}\grad r+ \nabla{\frac{|\vb|^2}2}-\vb\cn\vb  =\deA^{-1} (\pz\vb-\nabla b_3), \\
\label{MHDsy:vb}&\prt_t\vb+\vu\cn\vb
+(\nc\vu)\vb-\vb\cn\vu  =\deA^{-1}( \pz\vu- \vez\,\nc\vu),\qquad\nc\vb=0,
\end{align}
\end{subequations}
where 
the small parameters $\epM$ and $\deA$ are respectively the Mach number and \alfven number,
the fluid density is $1+\epM r$, its velocity is $\vu$, the magnetic field is $\vez+\deA \vb$ with
$\vez$ being the unit vector in the $z$-direction, and the coefficient functions $a$ and $R$ depend
on the constitutive relation giving the fluid pressure as a function of its density.

\section{Example}

Consider the system
\begin{equation}
  \label{eq:uvw}
  a(\eps w)u_t-\tfr\del v=0,\quad a(\eps w)v_t+\tfr\del u=0,\quad w_t=0,
\end{equation}
which has the form~\eqref{eq:3pde}, together with the initial data
\begin{equation}
  \label{eq:uvw0}
  u(0,x)=u_0\eqdef \del,\quad v(0,x)=v_0\eqdef 0, \quad w(0,x)=w_0(x)
\end{equation}
that satisfy the condition that the initial time derivative be uniformly bounded.
The system could be turned into one in which the large terms involve derivatives {\clrr  with
  respect to an additional spatial variable~$y$} by replacing the
terms $-\tfr\del v$ and $\tfr\del u$ by $\tfr\del v_y$ and $\tfr\del u_y$, respectively, and
changing $u_0$ to $\del\cos y$. A term containing $\tfr\eps$ could also be added.

It will be convenient to write the solution to \eqref{eq:uvw}\textendash\eqref{eq:uvw0} in terms of 
\begin{equation}
  \label{eq:z}
  z\eqdef u+iv,
\end{equation}
which satisfies
\begin{equation}
  \label{eq:zq}
  z_t={\clrr -}\frac{iz}{\del a(\eps w_0(x))},\quad z(0,x)=\del
\end{equation}
since $w(t,x)=w_0(x)$.
The solution to \eqref{eq:zq} is
\begin{equation}
  \label{eq:zsol}
  z(t,x)=\del e^{{\clrr -}\frac{it}{\del a(\eps w_0(x))}}.
\end{equation}
Differentiating \eqref{eq:zsol} or its derivatives with respect to~$t$ produces a term containing a
factor~$\tfr\del$, while differentiating with respect to~$x$ produces a term containing a
factor~$\frac\eps\del$ since the $x$-dependence in the exponent of~\eqref{eq:zsol} lies inside
$a(\eps \cdot)$. Taking into account the factor of $\del$ in \eqref{eq:zsol} that comes from the
initial condition, this shows that
\begin{equation}
  \label{eq:zest}
  \prt_x^\ell \prt_t^k z= \tfrac{\eps^\ell}{\del^{k+\ell-1}}(z_{k,\ell}(t,x)+
o(1)
)
\end{equation}
for some function~$z_{k,\ell}$ that is not identically zero provided that both $a$ and $w_0$
genuinely depend on their argument.

The standard existence theory for symmetric hyperbolic systems in spatial dimension~$d$ requires obtaining a bound on the
$H^{s_0+1}$ norm of solutions. The system~\eqref{eq:uvw} can be considered to be a system in any
dimension, and estimate~\eqref{eq:zest} implies that the solution
of~\eqref{eq:uvw}\textendash\eqref{eq:uvw0} will be uniformly bounded in $H^{s_0+1}$  only when
$\tfrac{\eps^{s_0+1}}{\del^{s_0}}$ is bounded, which requires that~\eqref{eq:deleps}
{\clrr must} hold.

Moreover, if the condition that the initial dat{\clrr a must} be well prepared is dropped then the initial value
of $u$ {\clrr  in \eqref{eq:uvw0}} can be $1$ rather than $\del$, which makes \eqref{eq:zest} more singular by one power of
$\del$. The condition that the $H^{s_0+1}$ norm {\clrr  of the solution} be uniformly bounded then requires that 
$\tfrac{\eps^{s_0+1}}{\del^{s_0+1}}$ be bounded, i.e., that $\del\ge c\eps$, so no general result
beyond the Klainerman-Majda balance is then possible.

\section{Uniform Existence Result}

\subsection{Scaling}

Estimate~\eqref{eq:zest} implies that the derivatives of solutions~$(u,v,w)$
of~\eqref{eq:uvw} satisfy corresponding estimates, except that $(u,v,w)$ itself and its pure spatial
derivatives are no smaller than $O(1)$ because that is the size of the component~$w$. These estimates
suggest that the appropriate norm of solutions of ~\eqref{eq:3pde} to estimate would be
\begin{equation}\label{eq:uvwnorm}
\|u\|_{H^{s_0+1}}+
\sum_{k=1}^{s_0+1}\sum_{0\le|\alp|\le s_0+1-k}\tfrac{\del^{k+|\alp|-1}}{\eps^{|\alp|}}
\|D^\alp\prt_t^k u\|_{L^2},
\end{equation}
where 
as usual 
$D^\alp$ denotes the spatial derivative
$\prod_{j=1}^d\prt_{x_j}^{\alp_j}$ of order $|\alp|\eqdef \sum_{j=1}^d\alp_j$.
Although our method indeed allows us to estimate the weighted norm \eqref{eq:uvwnorm} of solutions,
doing so requires keeping an exact count of the spatial derivatives appearing in instances of the
Gagliardo-Nirenberg inequalities \eqref{eq:gn} below. In order to avoid the need to count spatial
derivatives we will instead 
{
perform a simplified estimate by using
} 
weights that depend only on the number of time derivatives, with the
weight of the term $\prt_t^k u$ and its spatial derivatives equal to $\eps^{k}$. 
These weights equal their counterparts in~\eqref{eq:uvwnorm} for the highest spatial 
derivative of $\prt_t^ku$, under the assumption that equality holds in~\eqref{eq:deleps}.
For lower-order spatial derivatives when that equality holds, 
or for all cases when strict inequality holds in~\eqref{eq:deleps}, the
weights we use 
are  smaller than their counterparts in~\eqref{eq:uvwnorm}. 
Hence 
the simplified estimate will be somewhat weaker
than the estimate that would be obtained using~\eqref{eq:uvwnorm}.  With one exception this
difference is of little importance, because estimates of norms of time derivatives of solutions
weighted by small constants are simply a means of obtaining an unweighted estimate for the spatial
norms of solutions. The one exception is that the $L^2$ norm of $u_t$ in \eqref{eq:uvwnorm} has
weight one and so yields a uniform bound, while the $L^2$ norm of $u_t$ in the simplified scheme has
weight~$\eps$ and so does not yield a uniform bound. Obtaining a uniform bound for some norm of
$u_t$ is important for the convergence theory, and it will turn out that the time evolution of the
unweighted $L^2$ norm of $u_t$ can be estimated in terms of the norms appearing in the simplified
estimates, so we will simply adjoin the unweighted $L^2$ norm of $u_t$ to the simplified scheme of
estimates.

However, as is 
common
in the theory of hyperbolic systems, 
we must modify the standard $L^2$ and $H^s$ norms
to include the coefficient matrix~$A_0(\eps u)$ of the time-derivative term in the
PDE~\eqref{eq:3pde}, with the argument $\eps u$ of $A_0$ taken from some solution
to~\eqref{eq:3pde}.  We therefore define
\begin{equation}\label{eq:a0norms}
\begin{aligned}
  &\ip{v}{w}_{A_0}\eqdef \int v^T\!A_0(\eps u) w\,dx,\;\; \|v\|_{0,A_0}\eqdef\sqrt{\ip{v}{v}_{A_0}},
\;\;
\|v\|_{\ell,A_0}\eqdef \sqrt{\sum_{0\le|\alp|\le \ell}\|D^\alp v\|_{0,A_0}^2},
\\
&\vertiii{u}_{s,\eps,A_0}\eqdef\sqrt{\sum_{k=0}^s \eps^{2k}\|\prt_t^k u(t,\cdot)\|_{s-k,A_0}^2},
\quad
   \vertiv{u}_{s,\eps,A_0}\eqdef\sqrt{\vertiii{u}_{s,\eps,A_0}^2+\|u_t\|_{0,A_0}^2}\,.
\end{aligned}
\end{equation}
The
corresponding quantities with the subscript~$A_0$ omitted will denote the standard inner product and
norms in which $A_0$ is replaced by the identity matrix. Assumption~\ref{sum:hyp}
together with the estimates to be obtained will ensure that the two are equivalent 
for the time intervals considered here. The definitions in~\eqref{eq:a0norms} are a slight abuse of
notation, 
both since the argument of $A_0$ is usually not given explicitly but must be understood from the
context, and
because the value of $\vertiii{u}_{s,\eps,A_0}$ and $\vertiv{u}_{s,\eps,A_0}$ 
at a given time does not depend solely on the value
of~$u$ at that time on account of the inclusion of time derivatives.

{\clrr 
\begin{remark}\label{rem:exist}
  The standard existence theorem for symmetric hyperbolic systems (\cite[Ch. 2, Theorem 2.1]{Majdabook84})
  shows that there exists a unique solution, for some time that may depend on $\del$ and $\eps$, to
  the initial-value problem consisting of \eqref{eq:3pde} together with an initial condition
  $u(0,x)=u_0(x,\del,\eps)\in H^{s_0+1}$. Moreover (\cite[Ch. 2, Theorem 2.2]{Majdabook84}), that solution continues to exist as long as its
  $H^{s_0+1}$ norm remains finite. Hence in order to prove that the time of existence can be taken
  to be independent of $\del$ and $\eps$ it suffices to obtain a uniform bound on the $H^{s_0+1}$
  norm of the solution. The proof of the existence theorem uses estimates in which 
the function~$u$ appearing inside $A_0$ in the norm~$\|\,\|_{\ell,A_0}$ from~\eqref{eq:a0norms} differs from the solution being
estimated. However, since in this paper the solutions being estimated are already known to exist
the function~$u$ appearing inside $A_0$ in the norms~\eqref{eq:a0norms} will simply be the solution
that is being estimated. 
\end{remark}
}

\begin{remark}
  The standard energy estimates for both symmetric hyperbolic systems without large terms and for
  singular limits obeying Klainerman-Majda balance involve only spatial derivatives of the
  solution. The reasons why time derivatives are also needed here and why an unweighted estimate for
  the time derivative is only obtained in the $L^2$ norm will be explained after the proof
  of Theorem~\ref{thm:unifexist}.
\end{remark}

\subsection{Assumptions and initial data}

The following standard conditions on the terms appearing in system~\eqref{eq:3pde} will be assumed,
where $s_0$ is defined in \eqref{eq:s0}:
\begin{assumption}\label{sum:hyp}
  \begin{enumerate}
  \item The matrices $A_0$ and the $A_j$ are symmetric and are $C^{s_0+1}$ functions of their
    arguments.

  \item The matrix $A_0$ is positive definite; more specifically there are positive 
constants~$c_0$
    and $b_0$ such that 
\begin{equation}\label{eq:c0b0}
\text{
$A_0(v)\ge c_0I$\quad for\quad $|v|\le b_0$.}
\end{equation}
\item The function $G(t,x)\eqdef F(t,x,0)$ is bounded in $H^{s_0+1}$ uniformly in~$t$, and for $1\le k\le
  s_0+1$ the $H^{s_0+1-k}$ norm of $\prt_t^kG$ is bounded uniformly in t. In addition, the
  function~$H(t,x,u)\eqdef \int_0^1 \ppp{F}{u}(t,x,\alp u)\,d\alp$ belongs to $C^{s_0+1}$.

  \item The operators $\mathcal L$ and $\mathcal M$ are anti-symmetric constant-coefficient
    differential or pseudodifferential operators of order at most one.
  \end{enumerate}
\end{assumption}

\begin{remark}\label{rem:FGH}
  The identity 
\begin{equation*}
F(t,x,u)-F(t,x,0)=\int_0^1 \dd{\alp} F(t,x,\alp u)\,d\alp=\lb\int_0^1
  \ppp{F}{u}(t,x,\alp u)\,d\alp\rb u
\end{equation*}
together with the definitions in Assumption~\ref{sum:hyp}
show
  that 
\begin{equation}\label{eq:FGH}
F(t,x,u)\equiv G(t,x)+H(t,x,u)u.
\end{equation}
\end{remark}

As noted in the introduction, the initial data will be required to
be chosen so that $u_t(0,x)$ from~\eqref{eq:ut0} is uniformly bounded in~$H^{s_0}$. From the
PDE~\eqref{eq:3pde} we see that 
this
 well-preparedness 
condition is
 equivalent to the condition that
\begin{equation}
  \label{eq:dlemu0}
  (\tfr\del\mathcal L+\tfr\eps\mathcal M)u(0,x,\del,\eps) \text{\ be uniformly bounded in $H^{s_0}$.}
\end{equation}

Under the above conditions
the following lemma shows that
the $\vertiv{\;}_{s_0+1,\eps,A_0}$ norm of $u$ will be
uniformly bounded at time zero. In the statements of both this result and the main theorem we will
use the Sobolev embedding constant, i.e., the constant~$K$ such that 
\begin{equation}\label{eq:K}
\sup_x|v(x)|\le K\|v\|_{{s_0}}.
\end{equation}
\begin{lemma}\label{lem:initbnd}
  Assume that initial data satisfy 
\begin{equation}\label{eq:initm}
\begin{aligned}
\|u_0(x,\del,\eps)\|_{{s_0+1}}\le m_1
\qquad\text{and}\qquad
\|(\tfr\del\mathcal L+\tfr\eps\mathcal M)u_0(x,\del,\eps)\|_{{s_0}}\le m_2
\end{aligned}
\end{equation}
for all 
\begin{equation}\label{eq:eps0c1c2}
\text{
$0<\eps\le \eps_0$\quad and\quad $0<c_1\eps^{1+\fr{s_0}}\le\del\le 1$,}
\end{equation}
that Assumption~\ref{sum:hyp} holds, and that $\eps_0Km_1\le \frac{b_0}2$, where $b_0$ is defined in
Assumption~\ref{sum:hyp}. 
{\clrr  Let $u$ be any function such that $u(0,x,\del,\eps)=u_0(x,\del,\eps)$,
  $u_t(0,x,\del,\eps)$ equals the right side of \eqref{eq:ut0} obtained by solving~\eqref{eq:3pde}
  for $u_t$ and setting $t$ equal to zero, and the higher time derivatives of
  $u$  at time zero through order~$s_0+1$ are determined recursively in similar fashion by solving 
  $\prt_t^j$ of the PDE~\eqref{eq:3pde}  for $\prt_t^{j+1}u$, setting $t$ equal to zero, and
  substituting in the values of lower time derivatives of $u$ at time zero already so determined.}

Then there is a constant~$M$ depending
only on the spatial dimension~$d$, the 
constants~$c_0$
from~\eqref{eq:c0b0}, $m_1$
and~$m_2$ from \eqref{eq:initm}, and $\eps_0$ and $c_1$ from~\eqref{eq:eps0c1c2},
the norms of $\mathcal L$ and $\mathcal M$ as operators from $H^1$ to $L^2$, the $C^{s_0+1}$
norms of 
$A_0$, $A_j$, and $H$ over the domain
$\{|u|\le 2Km_1\}$,
and the $H^{s_0+1}$ norm of $G$, 
such that {\clrr  at time zero}
\begin{equation}
  \label{eq:initM}
  (\vertiv{u}_{s_0+1,\eps,A_0})\eval{t=0}\le M
\end{equation}
for all $\del$ and $\eps$ satisfying the above conditions.
\end{lemma}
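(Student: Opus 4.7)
The plan is to bound each summand in the definition of $\vertiv{u}_{s_0+1,\eps,A_0}$ at $t=0$ separately, by induction on the number of time derivatives. The values $\prt_t^k u(0,x)$ are computed recursively from the PDE as described in the statement, and each additional time derivative can introduce a factor of $\tfr{\del}$ or $\tfr{\eps}$ from the singular terms $\tfr{\del}\mathcal L$ and $\tfr{\eps}\mathcal M$. These growing factors will be counterbalanced by the weight $\eps^{2k}$ in the norm, and the tightness is exactly captured by the scaling \eqref{eq:deleps}.

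First I would dispose of preliminaries. Since $\eps_0 Km_1\le \tfrac{b_0}2$, the Sobolev embedding \eqref{eq:K} together with $\|u_0\|_{s_0+1}\le m_1$ yields $|\eps u_0(x)|\le \tfrac{b_0}2$, so $A_0(\eps u_0)\ge c_0 I$ by Assumption~\ref{sum:hyp}, and the $A_0$-weighted norms at $t=0$ are equivalent to the unweighted ones with constants depending only on $c_0$ and the sup-norm of $A_0$ on $\{|v|\le b_0\}$. The $k=0$ term is $\|u_0\|_{s_0+1,A_0}^2\le C m_1^2$. For the $k=1$ term and the adjoined $\|u_t\|_{0,A_0}$ term, I would apply $H^{s_0}$ algebra/Moser estimates (valid since $s_0>d/2$) to \eqref{eq:ut0}, using Assumption~\ref{sum:hyp} to bound $F(0,x,u_0)$, $A_j(u_0)(u_0)_{x_j}$ and $A_0(\eps u_0)^{-1}$ in $H^{s_0}$, and using the well-preparedness hypothesis~\eqref{eq:dlemu0} to bound the singular term $(\tfr\del\mathcal L+\tfr\eps\mathcal M)u_0$ by $m_2$ in $H^{s_0}$. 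This gives $\|u_t(0)\|_{s_0}\le C$, which dominates both $\eps^2\|u_t(0)\|_{s_0,A_0}^2$ and $\|u_t(0)\|_{0,A_0}^2$.

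The inductive step is the main content. Differentiating \eqref{eq:3pde} in time $k-1$ times and isolating the top-order term produces
\begin{equation*}
A_0(\eps u)\prt_t^k u=\prt_t^{k-1}F-\prt_t^{k-1}\!\sum_{j=1}^d A_j(u)u_{x_j}-\big(\tfr\del\mathcal L+\tfr\eps\mathcal M\big)\prt_t^{k-1}u-\sum_{j=1}^{k-1}\binom{k-1}{j}\prt_t^j[A_0(\eps u)]\,\prt_t^{k-j}u.
\end{equation*}
Proceeding by induction with the bound $\|\prt_t^j u(0)\|_{s_0+1-j}\le C_j\del^{-(j-1)}$ for $1\le j\le k-1$, the singular term is estimated in $H^{s_0+1-k}$ by
\begin{equation*}
\big\|(\tfr\del\mathcal L+\tfr\eps\mathcal M)\prt_t^{k-1}u(0)\big\|_{s_0+1-k}\le \tfrac{C}\del\|\prt_t^{k-1}u(0)\|_{s_0+2-k}\le C_{k-1}'\del^{-(k-1)},
\end{equation*}
where I have used $\del\le\eps$ from~\eqref{eq:delsmall} so that the $\tfr\del$ factor dominates. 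The Faà di Bruno expansions of $\prt_t^j[A_0(\eps u)]$ and $\prt_t^{k-1}[A_j(u)u_{x_j}]$ produce only products of lower-order time derivatives, each controlled by the inductive hypothesis via tame Moser inequalities, yielding a bound of order $\del^{-(k-2)}$; the $\prt_t^{k-1}F$ term is handled by Assumption~\ref{sum:hyp}(3) and Remark~\ref{rem:FGH} in the same fashion. Multiplying by $A_0(\eps u_0)^{-1}$, whose $H^{s_0}$ norm is under control, preserves the bound $\|\prt_t^k u(0)\|_{s_0+1-k}\le C_k\del^{-(k-1)}$, closing the induction. Summing gives
\begin{equation*}
\eps^{2k}\|\prt_t^k u(0)\|_{s_0+1-k}^2\le C\,\frac{\eps^{2k}}{\del^{2(k-1)}}\le C\,c_1^{-2(k-1)}\,\eps^{2-2(k-1)/s_0},
\end{equation*}
which is bounded by a constant for every $1\le k\le s_0+1$, with equality in the exponent at the endpoint $k=s_0+1$.

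The main obstacle is the bookkeeping in the inductive step: one must carefully expand the time-differentiated nonlinearities $A_0(\eps u)u_t$, $A_j(u)u_{x_j}$, and $H(t,x,u)u$ via the product and chain rules, and verify that at each occurrence of a tame product estimate the factor carrying the fewest derivatives in $H^{s_0+1-k}$ still lies in a space of index $>d/2$, or can be estimated via Sobolev embedding. This is where using both the bound $\|u_0\|_{s_0+1}$ (which feeds the spatial regularity of the coefficients) and the careful order-tracking made explicit in the inductive hypothesis is essential. Once the inductive bound $\|\prt_t^k u(0)\|_{s_0+1-k}\le C_k\del^{-(k-1)}$ is established, the scaling~\eqref{eq:deleps} is used in exactly one place — converting the weighted sum into a bound depending only on $c_1$, $\eps_0$, and the constants listed in the statement — which is how the hypothesis on $\del/\eps^{1+1/s_0}$ enters the conclusion.
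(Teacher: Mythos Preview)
Your proposal follows the same strategy as the paper: establish the inductive bound $\|\prt_t^k u(0)\|_{s_0+1-k}\le C_k\,\del^{-(k-1)}$ for $1\le k\le s_0+1$, then invoke the scaling hypothesis $\del\ge c_1\eps^{1+1/s_0}$ to control the weighted sum. The base cases $k=0,1$ and the identification of the singular term as the leading contribution are handled exactly as in the paper.

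There is one point of divergence in the inductive step. The paper does \emph{not} carry the inductive hypothesis forward directly; instead it re-expresses every $\prt_t^j u$ with $j\ge2$ back in terms of $u$, $u_t$, and their spatial derivatives (by iterating the PDE), so that every factor in the resulting products lies in the algebra $H^{s_0}$, and the product estimates are then trivial. Your approach, by contrast, keeps the factors $\prt_t^j u\in H^{s_0+1-j}$ and multiplies them together. This does work (the paper explicitly says so, referring to the Gagliardo--Nirenberg machinery of Theorem~\ref{thm:unifexist}), but your phrase ``tame Moser inequalities'' is not quite the right tool: the standard Moser product estimate needs at least one factor in $L^\infty$, and for $j\ge2$ one has $s_0+1-j\le\lfloor d/2\rfloor\le d/2$, so $\prt_t^j u\notin L^\infty$ in general. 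What you actually need is the bilinear Sobolev product rule $H^{s_1}\cdot H^{s_2}\hookrightarrow H^{s}$ for $s\le\min(s_1,s_2,s_1+s_2-d/2)$, or equivalently the Gagliardo--Nirenberg argument that the paper carries out in detail in the proof of Theorem~\ref{thm:unifexist}. The index check goes through here (with room to spare, since $s_0>d/2$), so your route is valid once this is substituted. The paper's re-expression trick simply avoids having to do that bookkeeping.
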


\begin{proof}
  Roughly speaking, the result of the lemma follows from the fact that when $u_t(0,x)$ is $O(1)$
  then 
using the PDE \eqref{eq:3pde} plus induction shows
that at time zero $\eps^k\prt_t^ku$ is
  $O(\frac{\eps^k}{\del^{k-1}})$ for $2\le k\le s_0+1$, which is $O(1)$ on account of the assumption
  that $\del\ge c_1\eps^{1+\fr{s_0}}$, and hence yields the uniform boundedness of the
  $\vertiv{\;}_{s_0+1,\eps,A_0}$ norm of $u$ at time zero. 

More specifically, 
by repeated applications of the PDE \eqref{eq:3pde} to express 
higher time derivatives in
  terms of 
$u$, $u_t$ and their spatial derivatives,
 and applications of $\mathcal L$ and $\mathcal
  M$ to them, we obtain 
that, for $2\le k\le s_0+1$, the
leading-order term of $\prt_t^k u$ is 
 $(\tfr{\del}\mathcal L+\tfr\eps\mathcal M)^{k-1}\prt_t u\eval{t=0}$,
 which yields
the estimate $\eps^k\|\prt_t^k u\eval{t=0}\|_{{s_0+1-k}}\le
  c\frac{\eps^k}{\del^{k-1}}$. To see this note first that the assumptions on the initial data
  ensure that $A_0\ge c_0I$. 
Applying 
 $\prt_t^{k-1}$ to~\eqref{eq:3pde}, using the invertibility of~$A_0$ to solve the result  for
  $\prt_t^ku$, taking up to $s_0+1-k$ spatial derivatives of the result, and summing the $L^2$ norms
  of the results yields a formula for the $H^{s_0+1-k}$ norm of $\prt_t^ku$ in terms of $L^2$ norms
  of products of spatial derivatives of lower-order time derivatives of~$u$. Note that coefficients
  such as $A_j(u)$ can be estimated in the maximum norm in terms of $\|u\|_{{s_0}}$ and so may be
  pulled out of those $L^2$ norms. 

For the case~$k=2$ this
  yields an estimate of $\|u_{tt}\|_{{s_0-1}}$ in terms of $L^2$ norms of products of the factors $u$, $u_x$
  $u_t$, $G$ and $G_t$ and their spatial derivatives of order at most $s_0$, with coefficients of
  size at most $O(\tfr\del)$ coming from the presence of $\tfr\del$ in the time derivative of \eqref{eq:3pde}.
Since all those factors are bounded in $H^{s_0}$ at time zero, and $H^{s_0}$ is an algebra, this
yields the estimate~$\|u_{tt}\eval{t=0}\|_{{s_0-1}}\le\frac c\del$.

The analogous expressions for $\|\prt_t^ku\|_{{s_0+1-k}}$ with $k>2$ include factors of $u_{tt}$
and possibly higher time derivatives, plus their spatial derivatives. Although~$u_{tt}$ and
higher-order time derivatives of $u$ do not belong to $H^{s_0}$ at time zero, the resulting
expressions could be estimated by the method used in the proof of Theorem~\ref{thm:unifexist} to
estimate similar expressions. However, it is simpler to use finite induction to express higher-order time
derivatives in terms of $u$ and $u_t$. Since the time derivative in~\eqref{eq:3pde} is expressed in
terms of expressions involving at most one spatial derivative, this again yields an estimate in
terms of  $L^2$ norms of products of the factors $u$, $u_x$,
  $u_t$ and their spatial derivatives of order at most $s_0$, 
plus time and spatial derivatives of $G$ of order at most $s_0$, this time with
  coefficients of size at most $O(\tfr{\del^{k-1}})$ since equation~\eqref{eq:3pde} is used at most
  $k-1$ times to express $k-1$ time derivatives in terms of spatial derivatives. Note that 
{\clrr   wherever $u_t$ and its spatial derivatives occur the time derivative is left unaltered rather
  than using}  \eqref{eq:3pde} to express $u_t$ in terms of $u$, because $u_t$ is $O(1)$ at time zero but the
  individual terms on the right side of \eqref{eq:3pde} may not be. This yields the estimates
  $\|\prt_t^ku\eval{t=0}\|_{{s_0+1-k}}\le\frac c{\del^{k-1}}$. As indicated at the beginning of
  the proof, these estimates together with assumption~\eqref{eq:eps0c1c2} show that the
  $\vertiv{\;}_{s_0+1,\eps,A_0}$ norm of $u$ is uniformly bounded at time zero.
\qed\end{proof}

The 
well-preparedness
condition \eqref{eq:dlemu0} can be achieved, for example,
 by using initial data of the form
\begin{equation}
  \label{eq:u0}
  u(0,x,\del,\eps)=u_0(x,\del,\eps)\eqdef\sum_{j=0}^m \lp\tfrac{\del}{\eps}\rp^{j} 
\widetilde u_{j}(x)+\del U_0(x,\del,\eps)
\end{equation}
for some nonnegative integer~$m$, with the $\widetilde u_{j}$ belonging to $H^{s_0+1}$ and $U_0$ 
bounded in that
space uniformly in~$\del$ and~$\eps$. 
In fact, since
\begin{equation*}
  (\tfr{\del}\mathcal L+\tfr\eps\mathcal M) u_0(x,\del,\eps)
=\tfr\del \mathcal L\widetilde u_0+\sum_{j=1}^m \tfrac{\del^{j-1}}{\eps^j}(\mathcal L\widetilde u_j+\mathcal
M\widetilde u_{j-1})+\tfrac{\del^m}{\eps^{m+1}}\mathcal M \widetilde u_m+O(1)
\end{equation*}
in view of the scaling assumption~\eqref{eq:delsmall},
condition~\eqref{eq:dlemu0} will hold provided that
\begin{equation*}
  \mathcal L\widetilde u_0=0,\;\; \mathcal L \widetilde u_j=-\mathcal M\widetilde u_{j-1} 
\text{\ for $j=1,\cdots,m$,}\;\;
\text{and either $\mathcal M\widetilde u_m=0$ or $\del^m\le c\eps^{m+1}$.}
\end{equation*}
For example, the well-preparedness condition holds when $m=0$ and $\mathcal L\widetilde
u_0=0=\mathcal M\widetilde u_0$, or when $m=s_0$, equality holds in \eqref{eq:deleps}, $\mathcal
L\widetilde u_0=0$, and 
\begin{equation}\label{eq:lujmujm1}
\mathcal L \widetilde u_j=-\mathcal M\widetilde u_{j-1} \text{\ for $j=1,\cdots,s_0$.}
\end{equation}
When the ranges of~$\mathcal L$ and~$\mathcal M$ overlap,
the condition~\eqref{eq:lujmujm1} allows more general initial data than would be
obtained by requiring that each
side of those equations vanish separately.

\subsection{Theorem and proof}

\begin{theorem}\label{thm:unifexist}
  Under the assumptions of Lemma~\ref{lem:initbnd}, there exists a constant~$T$ depending only on
  the quantities that $M$ in that lemma depends on, such that for all $\eps$ and $\del$
  satisfying~\eqref{eq:eps0c1c2} the solution of the initial-value problem~\eqref{eq:3pde},
  $u(0,x,\del,\eps)=u_0(x,\del,\eps)$ exists on $[0, T]$ and 
satisfies $\max_{0\le t\le T}\vertiv{u}_{s_0+1,\eps,A_0} \le 2M$.
\end{theorem}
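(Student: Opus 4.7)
The plan is to prove Theorem~\ref{thm:unifexist} by the standard continuation argument. Lemma~\ref{lem:initbnd} provides the uniform initial bound $\vertiv{u}_{s_0+1,\eps,A_0}|_{t=0}\le M$, and Remark~\ref{rem:exist} yields local-in-time existence in $H^{s_0+1}$. Define, for each $(\eps,\del)$ satisfying~\eqref{eq:eps0c1c2}, the set $\mathcal I\eqdef\{t\in[0,T_{\max}):\vertiv{u}_{s_0+1,\eps,A_0}(\tau)\le 2M\text{ for all }\tau\in[0,t]\}$, and let $T^\star\eqdef\sup\mathcal I$. The goal is to produce a uniform-in-$(\eps,\del)$ time $T>0$ such that $T^\star\ge T$, so that the continuation criterion from Remark~\ref{rem:exist} forces $T_{\max}>T$ and the stated bound follows. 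On $[0,T^\star]$ one has in particular $\sup_x|\eps u|\le 2\eps KM\le b_0$ once $\eps_0$ is taken small enough, so the coercivity $A_0(\eps u)\ge c_0 I$ is in force, the weighted and unweighted norms are equivalent, and all smooth coefficients and their $C^{s_0+1}$ bounds are uniformly controlled.

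On $[0,T^\star]$ I would derive a differential inequality for $E(t)\eqdef\vertiv{u}_{s_0+1,\eps,A_0}^2$ by the following energy procedure. For each pair $(k,\alpha)$ with $0\le k\le s_0+1$ and $|\alpha|\le s_0+1-k$, apply $D^\alpha\prt_t^k$ to \eqref{eq:3pde} and take the $L^2$ inner product of the result with $\eps^{2k}D^\alpha\prt_t^k u$. The time-derivative term produces $\tfrac12\tddt\eps^{2k}\|D^\alpha\prt_t^ku\|_{0,A_0}^2$ up to an error involving $\prt_t A_0(\eps u)=\eps A_0'(\eps u)u_t$; the convection terms $\sum_j A_j(u)\prt_{x_j}$ are handled by integration by parts, which produces a $\nc A_j(u)$ factor that is uniformly bounded by $\|u\|_{s_0+1}\le 2M$. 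Crucially, the singular contributions $\ip{\eps^{2k}D^\alpha\prt_t^k u}{\bigl(\tfr\del\mathcal L+\tfr\eps\mathcal M\bigr)D^\alpha\prt_t^k u}_{A_0}$ involve commutators with $A_0$ that are regularized by the factor of $\eps$ in $A_0(\eps u)$, while the leading ``constant-coefficient part'' vanishes identically, because $\mathcal L$ and $\mathcal M$ are antisymmetric constant-coefficient operators that commute with $D^\alpha\prt_t^k$. This is the mechanism that removes the potentially ruinous $\tfr\del$ and $\tfr\eps$ factors.

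The remaining terms are commutators $[D^\alpha\prt_t^k,A_0(\eps u)]\prt_t u$, $[D^\alpha\prt_t^k,A_j(u)]\prt_{x_j}u$, and $D^\alpha\prt_t^k F-F_uD^\alpha\prt_t^k u$. These expand by the Leibniz rule into sums of $L^2$ pairings of products of lower-order mixed space-time derivatives of $u$, which are controlled by Moser-type product estimates and Gagliardo-Nirenberg interpolation, using the Sobolev embedding constant~\eqref{eq:K}. The Klainerman-Majda observation is applied at each occurrence of $\prt_t$ or $D$ falling on $A_0(\eps u)$: the chain rule produces a factor of $\eps$ for every such derivative, converting an otherwise unweighted $\prt_t^j u$ into $\eps^j\prt_t^j u$, which is exactly what the $\eps$-weighted norm \eqref{eq:a0norms} controls. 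The unweighted $\|u_t\|_{0,A_0}$ piece of $\vertiv{\cdot}_{s_0+1,\eps,A_0}$ is estimated separately by differentiating~\eqref{eq:3pde} once in $t$ and pairing with $u_t$ itself: the $\tfr\del\mathcal L u_t$ and $\tfr\eps\mathcal M u_t$ terms drop by antisymmetry, while $\int A_0'(\eps u)\eps u_t\cdot u_t\cdot u_t$ is bounded by $K\|\eps u_t\|_{s_0}\|u_t\|_{0,A_0}^2$, an expression already contained in $P(E)\cdot E$ for a polynomial $P$.

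Summing over all $(k,\alpha)$ and combining with the $\|u_t\|_{0,A_0}$ estimate yields an inequality of the form $\tddt E\le Q(E)$ for a continuous nondecreasing $Q$ depending only on the structural quantities listed in Lemma~\ref{lem:initbnd}. Since $E(0)\le M^2$, a standard ODE comparison argument produces a $T>0$, depending on those same quantities but not on $(\eps,\del)$, such that $E(t)\le 4M^2$ on $[0,\min(T,T^\star)]$. A standard continuity/maximality argument then gives $T^\star\ge T$, completing the proof. The main obstacle is the bookkeeping in the commutator estimates: one has to verify that for every term arising from distributing $D^\alpha\prt_t^k$ across $A_0(\eps u)$, $A_j(u)$, and $H(t,x,u)u$, the $\eps$-weight produced by the chain rule together with the prefactor $\eps^{2k}$ pairs each $\prt_t^j u$ factor with the right power of $\eps$, so that every commutator can be absorbed into $P(E)$. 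Without this matching the $\tfr\del$ and $\tfr\eps$ coefficients from \eqref{eq:3pde}, which are hidden inside the evolution of $\prt_t^k u$ via \eqref{eq:ut0} and its iterates, would re-emerge and destroy the bound.
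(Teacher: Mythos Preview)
Your overall strategy---continuation plus weighted energy estimates, with the singular operators $\tfr\del\mathcal L$ and $\tfr\eps\mathcal M$ disappearing by antisymmetry, and a separate unweighted estimate for $\|u_t\|_{0,A_0}$---is the paper's approach. Two points need correction.

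First, your treatment of the singular terms is garbled. You write their contribution as $\ip{D^\alpha\prt_t^k u}{(\tfr\del\mathcal L+\tfr\eps\mathcal M)D^\alpha\prt_t^k u}_{A_0}$ and speak of ``commutators with $A_0$ that are regularized by the factor of~$\eps$.'' That is not how the cancellation works. One takes the \emph{plain} $L^2$ pairing of $D^\alpha\prt_t^k u$ with the differentiated PDE (as you yourself say a few lines earlier); since $\mathcal L$ and $\mathcal M$ are constant-coefficient and antisymmetric, $\int (D^\alpha\prt_t^k u)\cdot\mathcal L\, D^\alpha\prt_t^k u\,dx=0$ exactly, with no $A_0$ present and no remainder to ``regularize.'' If one literally used the $A_0$-weighted pairing here the argument would collapse: $A_0(0)\mathcal L$ need not be antisymmetric, and the variable part $(A_0(\eps u)-A_0(0))$ contributes a term of order $\eps/\del\gg1$ under~\eqref{eq:delsmall}. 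The $A_0$-weighting in $\|\cdot\|_{0,A_0}$ arises solely from the pairing of $D^\alpha\prt_t^k u$ with $A_0(\eps u)\,\prt_t(D^\alpha\prt_t^k u)$, not from the singular terms.

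Second, and more substantively, you underestimate the commutator step. You invoke ``Moser-type product estimates and Gagliardo--Nirenberg interpolation,'' but the standard Moser/commutator calculus rests on the $L^\infty$-based inequality~\eqref{eq:gninf}, and here $\prt_t^k u$ for $k\ge2$ is not controlled in $L^\infty$ (nor in any $H^{s_0}$) by the weighted norm $\vertiii{u}_{s_0+1,\eps,A_0}$. This is precisely why the paper cannot simply quote Moser estimates: it bounds each product~\eqref{eq:prodder} via multi-factor H\"older followed by the $L^2$-based Gagliardo--Nirenberg inequality~\eqref{eq:gn}, and must verify that H\"older exponents $p_\ell$ satisfying both~\eqref{eq:phold} and~\eqref{eq:pineq} can be chosen---the combinatorial check culminating in~\eqref{eq:sumineq3}. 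That verification is the technical core of the proof, and your sketch does not address it; a routine appeal to Moser estimates would not close.
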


\begin{proof}
{\clrr The local existence and continuation theorems (\cite[Ch 2., Theorems
2.1--2.2]{Majdabook84}) mentioned in Remark~\ref{rem:exist} ensure that the solution of the
initial-value problem exists on some time interval that might depend on $\del$ and $\eps$, and
will continue to exist for a time independent of those small parameters provided that it satisfies
an $H^{s_0+1}$ estimate independent of them. Hence it suffices to prove such an estimate. Moreover,
although the norm~$\vertiv{\,}_{s_0+1,\eps,A_0}$ used in the estimates below  depends on the
solution~$u$ being estimated, condition~\eqref{eq:c0b0} ensures that the resulting estimate will
indeed be uniform. 
}
The estimates that will be derived are similar to standard energy estimates for solutions of
symmetric hyperbolic systems but require keeping
track of the powers of $\del$ and $\eps$ that appear in those estimates for the system~\eqref{eq:3pde}. 

Applying $D^\alp\prt_t^k$ with $0\le k\le s_0+1$ and $0\le |\alp|\le s_0+1-k$ to \eqref{eq:3pde},
taking the inner product with $2D^\alp\prt_t^k u$, integrating over the spatial variables,
integrating by parts in the terms that involve $A_j$ undifferentiated, noting that the terms
involving $\mathcal L$ or $\mathcal M$ drop out on account of the anti-symmetry of those operators,
summing over all $\alp$ satisfying the above-mentioned condition, 
and multiplying the result by the weight~$\eps^{2k}$ yields
  \begin{equation}
    \label{eq:est1}
\begin{aligned}
    &\tddt \big[\eps^{2k}\|\prt_t^k u\|_{s_0+1-k,A_0}^2\big]=\tddt\lb\eps^{2k}
\sum_{0\le|\alp|\le s_0+1-k}\int (D^\alp\prt_t^k u)\cdot A_0(\eps u)(D^\alp\prt_t^k u)\,dx\rb
\\&=\eps^{2k}\sum_{ 0\le|\alp|\le s_0+1-k}\int (D^\alp\prt_t^k u)\cdot
\Big[ \eps u_t\cdot \gradu A_0+\sum_j u_{x_j}\cdot\gradu A_j \Big]
(D^\alp\prt_t^k u)\,dx
\\&\fakeq+2\eps^{2k}\cdot
\\[-4pt]&\;\;\sum_{0\le|\alp|\le s_0+1-k}\int (D^\alp\prt_t^k u)\cdot
\lB D^\alp\prt_t^k (G+Hu) -  [D^\alp\prt_t^k,A_0]u_t-\sum_j[D^\alp\prt_t^k,A_j]u_{x_j}\rB\,dx
\\&\le
\|\eps u_t\cdot \gradu A_0+\sum_j u_{x_j}\cdot\gradu A_j\|_{L^\infty}
\eps^{2k}\|\prt_t^ku\|_{s_0+1-k}^2
\\&\fakeq+c {\eps^{k}}\|\prt_t^k u\|_{s_0+1-k}\cdot
\\&\fakeq\;\;\;\Big({\eps^{k}}\|\prt_t^k G\|_{s_0+1-k}+{\eps^{k}}\|H\|_{L^\infty}\|\prt_t^ku\|_{s_0+1-k}
+{\eps^{k}}\Big[\sum_{0\le|\alp|\le s_0+1-k}\!\!\!\!\!\!\|[D^\alp\prt_t^k,A_0]u_t\|_{L^2}^2\Big]^{1/2} 
\\&\fakeq\fakeq\fakeq\fakeq\fakeq\fakeq\fakeq
+{\eps^{k}}\Big[\sum_{0\le|\alp|\le s_0+1-k}\big(\|D^\alp\prt_t^k,H]u\|_{L^2}^2
+\sum_j\|[D^\alp\prt_t^k,A_j]u_{x_j}\|_{L^2}^2\big)\Big]^{1/2} \Big),
\end{aligned}
\end{equation} 
where the inequality is obtained by pulling out $\eps u_t\cdot \gradu A_0+\sum_j u_{x_j}\cdot\gradu
A_j$ from the first integral in maximum norm, and breaking the second integral into several parts and
using the Cauchy-Schwartz inequality in each of them.

Since $A_0=A_0(\eps u)$ will be differentiated at least once when it appears in any
commutator term on the right side of the inequality in~\eqref{eq:est1}, which yields at least
  one power of $\eps$, the power of $\eps$ in
every term appearing on the right side of the inequality in~\eqref{eq:est1} is at least as large as
the total number of time derivatives in that term.  By the definition of the
$\vertiii{\;}_{s_0+1,\eps,A_0}$ norm plus the smoothness assumption on~$A_0$, this implies that in
order to bound the right side of the inequality in~\eqref{eq:est1} by a continuous function
of~$\vertiii{u}_{s_0+1,\eps,A_0}$ it suffices to bound all the terms there by a continuous function
of $\vertiii{u}_{s_0+1,1}$ after replacing $\eps$ by~$1$ and replacing $A_0$ by the identity
  matrix.

The condition on $s_0$ ensures that $\|u_t\|_{L^\infty}$ and $\|\grad u\|_{L^\infty}$ are bounded by
a constant times $\|u_t\|_{s_0}$ and $\|u\|_{s_0+1}$, respectively, and those norms are each bounded
by $\vertiii{u}_{s_0+1,1}$. By the smoothness of the $A_j$,
$\sum_{j=0}^d\|\grad_u A_j\|_{L^\infty}\le c(\|u\|_{s_0})\le \widetilde c(\vertiii{u}_{s_0+1,1})$ for
some continuous function~$\widetilde c$. This yields the desired estimate for the entire first term
on the right side of the inequality in~\eqref{eq:est1}. The terms on the right side of the
inequality in~\eqref{eq:est1} in which $G$ and the $L^\infty$ norm of $H$ appear are also so bounded in
view of the assumptions of those functions. 

There remains to estimate only the terms on the right side of the inequality in~\eqref{eq:est1} that
involve commutators. Since the factor $\|\prt_t^k u\|_{s_0+1-k}$ multiplying the norms of the
commutators is one of the terms in $\vertiii{u}_{s_0+1,1}$, only the norms of the commutator terms
themselves must be estimated.  We can pull out in the
$L^\infty$ norm any factor such as $\grad_u H$ that depends only on $t$, $x$ and $u$ without
derivatives, and the assumptions on the various coefficients ensure that each factor so pulled out
is bounded by a continuous function of $\|u\|_{s_0}$ and hence by a continuous function of
$\vertiii{u}_{s_0+1,1}$. Since the presence of the commutator ensures that at least one
derivative will be applied to the function appearing in the commutator, the 
terms arising from the commutators that remain inside the $L^2$ norms
all take the form 
\begin{equation}\label{eq:prodder}
\Big[\int \prod_{\ell=1}^L |D^{\alp_\ell}\prt_t^{k_\ell}u|^2\,dx\Big]^{1/2},
\end{equation}
where $L\ge2$, $1\le |\alp_\ell|+k_\ell\le s_0+1$,  and $\sum_\ell (|\alp_\ell|+k_\ell)\le s_0+2$. 
If $|\alp_\ell|+k_\ell=s_0+1$ for some~$\ell$ then only one derivative is applied to the other
factor, so that factor can be pulled out in $L^\infty$ norm and estimated by $\|u\|_{s_0+1}$ or
$\|u_t\|_{s_0}$, both of which appear in $\vertiii{u}_{s_0+1,1}$. After pulling out that factor the
integral becomes  $\int
|D^{\alp_\ell}\prt_t^{k_\ell}u|^2\,dx$, which is bounded by $\|\prt_t^{k_\ell}u\|_{s_0+1-k_\ell}^2$,
which also appears in $\vertiii{u}_{s_0+1,1}$. Otherwise $|\alp_\ell|+k_\ell\le s_0$ for all $\ell$,
and by using the multiple-factor version of H\"older's inequality
we will bound the integral in 
\eqref{eq:prodder} by
\begin{equation}\label{eq:terms2}
  \prod_{\ell=1}^L \lp\int \la D^{\alp_\ell}\prt_t^{k_\ell}u\ra^{2p_\ell}\,dx\rp^{\fr{p_\ell}},
\end{equation}
where the exponents $p_\ell$, which will be chosen later, 
must satisfy  
\begin{equation}\label{eq:phold}
1\le p_\ell\le\infty \quad\text{and}\quad \sum_\ell \frac 1{p_\ell}=1.
\end{equation}
The integrals in ~\eqref{eq:terms2} will then be bounded via
the Gagliardo-Nirenberg inequality (e.g., \cite[p. 24]{friedman76:pde})
\begin{equation}
  \label{eq:gn}
  \|v\|_{L^p}\le c\| v\|_{r}^a\|v\|_{L^2}^{1-a},
\end{equation}
in which the parameters must satisfy $\tfr p=\tfr2-\frac{ar}d$, $r\ge1$, and $0\le a< 1$, where as usual $d$
is the spatial dimension. Although $a$ is actually allowed to equal the endpoint value~$1$ for many
values of the other parameters, we will avoid that value in order to obtain a unified proof.
The inequality constraint on~$a$ will hold provided that $\half\ge \fr
p>\fr2-\frac rd$.  In order to estimate the integrals in~\eqref{eq:terms2} we apply \eqref{eq:gn}
with $v\eqdef D^{\alp_\ell}\prt_t^{k_\ell}u$, so we will let $r=s_0+1-(|\alp_\ell|+k_\ell)$, since
that is the highest Sobolev norm of $D^{\alp_\ell}\prt_t^{k_\ell}u$ that is bounded by
$\vertiii{u}_{s_0+1,1}$. Since we only use \eqref{eq:terms2} when $|\alp_\ell|+k_\ell\le s_0$ for
all $\ell$, the condition $r\ge1$ will indeed hold. Since the norm of $D^{\alp_\ell}\prt_t^{k_\ell}u$ appearing in
\eqref{eq:terms2} is the $L^{2p_\ell}$ norm, $p$ in \eqref{eq:gn} equals~$2p_\ell$. Substituting in
these values  and multiplying
everywhere by two turns the inequality constraint on $p$ into the inequality constraint
\begin{equation}
  \label{eq:pineq}
  1 \ge \fr{p_\ell}>1-\frac{2(s_0+1-(|\alp_\ell|+k_\ell))}d
\end{equation}
on $p_\ell$. We now show that it is possible to choose the $p_\ell$ such that both \eqref{eq:phold}
and \eqref{eq:pineq} hold.

Since $|\alp_\ell|+k_\ell\le s_0$, the interval to which $\fr{2p_\ell}$ is restricted by \eqref{eq:pineq} is nonempty.
Since $|\alp_\ell|+k_\ell\ge 1$ and $\frac d2\le s_0\le \frac d2+1$, the lower limit in \eqref{eq:pineq} is negative
iff $|\alp_\ell|+k_\ell=1$. The inequality in \eqref{eq:phold} is equivalent to 
\begin{equation}
  \label{eq:pholdineq}
  1\ge \fr{p_\ell}\ge 0. 
\end{equation}
Combining \eqref{eq:pholdineq} with \eqref{eq:pineq} yields
\begin{equation}
  \label{eq:pineq2}
  1\ge \fr{p_\ell}>\max\lp 0,1-\frac{2(s_0+1-(|\alp_\ell|+k_\ell))}d\rp
\end{equation}
where for simplicity we ignore the possibility $p_\ell=\infty$, which will not be needed.
Every value of $p_\ell$ satisfying \eqref{eq:pineq2} is allowed by both \eqref{eq:pineq} and
\eqref{eq:pholdineq}, so it suffices to show that we can choose values in the intervals in
\eqref{eq:pineq2} that sum to one. That is possible iff the sum of the lower values there is less
than one and the sum of the upper values is at least one. The latter condition holds
trivially, and as noted above the second expression inside the $\max$ in \eqref{eq:pineq2} is negative iff
$|\alp_\ell|+k_\ell=1$, so it suffices to show that
\begin{equation}
  \label{eq:sumineq}
  1>\sum_{\substack{1\le \ell\le L\\ |\alp_\ell|+k_\ell\ge 2}} \lb
1-\frac{2(s_0+1-(|\alp_\ell|+k_\ell))}d\rb
\end{equation}
Let $L_2$ denote the number of values of $\ell$ for which $|\alp_\ell|+k_\ell\ge2$. If $L_2=0$ then
the sum on the right side of \eqref{eq:sumineq} vanishes, so that condition indeed holds. When
$L_2\ge1$ then \eqref{eq:sumineq} can be written more explicitly as
\begin{equation}
  \label{eq:sumineq2}
  1>L_2(1-\tfrac 2d(s_0+1))+\tfrac 2d\lb \lp \sum_{\ell=1}^L(|\alp_\ell|+k_\ell)\rp-(L-L_2)\rb.
\end{equation}
Condition \eqref{eq:sumineq2} can be rewritten as
\begin{equation}
  \label{eq:sumineq3}
  (L_2-1)(s_0-\tfrac d2)>(2-L)+\Big(\sum_{\ell=1}^L(|\alp_\ell|+k_\ell)-(s_0+2)\Big).
\end{equation}
Since $L_2\ge1$ by assumption, $s_0>\frac d2$, $L\ge2$ and $\sum_{\ell=1}^L(|\alp_\ell|+k_\ell)\le s_0+2$,
the left side of \eqref{eq:sumineq3} is non-negative, and the right side there is non-positive.
Moreover, since $L\ge2$, if $L_2=1$ then exists an $\ell$ for which $|\alp_\ell|+k_\ell=1$, and in that case
the fact that $|\alp_\ell|+k_\ell\le s_0$ implies that either
either $L>2$ or $\sum_{\ell}(|\alp_\ell|+k_\ell)<s_0+2$. This shows that either the left side of
\eqref{eq:sumineq3} is strictly positive or the right side there is strictly negative, and hence
that inequality indeed holds. 

Summing over $0\le k\le s_0+1$ the estimates that we have obtained shows that
\begin{equation}
  \label{eq:vert3est}
  \ddt\vertiii{u}_{s_0+1,\eps,A_0}^2\le c(\vertiii{u}_{s_0+1,\eps,A_0})
\end{equation}
for some continuous function~$c$.

Finally, differentiating~\eqref{eq:3pde} with respect to $t$, 
taking the inner product of the result with $2u_t$, integrating over the spatial variables,
integrating by parts in the terms that involve $A_j$ undifferentiated, and noting that the terms
involving $\mathcal L$ or $\mathcal M$ drop out on account of the anti-symmetry of those operators
yields
  \begin{equation}
    \label{eq:estut}
\begin{aligned}
    &\tddt \big[\|u_t\|_{0,A_0}^2\big]=\tddt\lb
\int u_t\cdot A_0(\eps u) u_t \,dx\rb
\\&=\int \prt_t u\cdot
\Big[ \eps u_t\cdot \gradu A_0+\sum_j u_{x_j}\cdot\gradu A_j \Big]u_t
\,dx
\\[-4pt]&\fakeq+2\int u_t\cdot
\lB \prt_t (G+Hu) -  (\eps u_t\cdot \grad_u A_0)u_t-\sum_j (u_t\cdot\grad_u A_j)u_{x_j}\rB\,dx
\\&\le
\|\eps u_t\cdot \gradu A_0+\sum_j u_{x_j}\cdot\gradu A_j\|_{L^\infty}
\|\prt_tu\|_0^2
\\&\fakeq+c \|\prt_t u\|_{0}
\Big(\|\prt_t G\|_{0}+\|H\|_{L^\infty}\|\prt_tu\|_{0}
+\|\tppp{H}{t}\|_{L^\infty}\|u\|_0+\|u_t\|_0\|\grad_u H\|_{L^\infty}\|u\|_{L^\infty}
\\&\fakeq\fakeq\fakeq\fakeq\fakeq\fakeq\fakeq\fakeq
+\eps \|u_t\|_{L^\infty}\|\grad_u A_0\|_{L^\infty}\|u_t\|_0
+\|u_t\|_0 \sum_j\|\grad_u A_j\|_{L^\infty}\|u_{x_j}\|_{L^\infty}\Big),
\\&\le
c(\vertiii{u}_{s_0+1,\eps,A_0})\lb \|u_t\|_0^2+\|u_t\|_0\rb
\end{aligned}
\end{equation} 
where the first inequality follows in similar fashion to~\eqref{eq:est1} and the second from 
Assumption~\ref{sum:hyp} plus the
definition of the $\vertiii{\;}$ norm. Adding \eqref{eq:estut} to \eqref{eq:vert3est} yields the
uniform estimate
\begin{equation}
  \label{eq:vert4est}
   \ddt\vertiv{u}_{s_0+1,\eps,A_0}^2\le c(\vertiv{u}_{s_0+1,\eps,A_0})
\end{equation}
for some continuous function~$c$. 
By Lemma~\ref{lem:initbnd},  $\vertiv{u}_{s_0+1,\eps,A_0}$ is bounded uniformly in
$\del$ and $\eps$ by $M$ at time zero, so the differential inequality \eqref{eq:estut}
shows that there is a fixed positive
constant~$T$ such that $\max_{0\le t\le T}\vertiv{u}_{s_0+1,\eps,A_0}\le 2M$. 
\qed\end{proof}

\begin{remark}
  \begin{enumerate}

\item In the standard energy estimates for spatial derivatives of solutions of systems without large
  terms and of systems satisfying Klainerman-Majda balance, integrals of the
  form~\eqref{eq:terms2} not containing time derivatives are estimated using
 the Gagliardo-Nirenberg inequality 
\begin{equation}\label{eq:gninf}
 \|D^\alp v\|_{L^p}
\le c \|v\|_{s}^{a} \|v\|_{L^\infty}^{1-a}
\end{equation}
with $p=\frac{2s}{|\alp|}$, $a=\frac{|\alp|}{s}$, $s\ge s_0$, and $s>|\alp|$ instead of
\eqref{eq:gn}.  However, it is not possible to use \eqref{eq:gninf} to estimate integrals involving
second and higher time derivatives, because the boundedness of $\vertiii{u}_{s_0+1,\eps,A_0}$ does
not imply even an $\eps$-dependent bound for $\|\prt_t^k u\|_{L^\infty}$ when $k\ge2$.

  \item The special case of \eqref{eq:terms2} and \eqref{eq:gn} in which $p_\ell=2$,
    $|\alp_\ell|+k_\ell=2$, $p=4$, $r=1$, $a=\frac d4$, and $d$ is either two or three so that
    $s_0=2$ was used previously in \cite[\S 4.1 and Appendix]{mr3284568}. 

  \item The expression $\eps u_t$ appears in the estimates for a purely spatial derivative $D^\alp$
    of $u$, arising from the commutator term $[D^\alp,A_0]u_t$.  When the spatial derivative terms
    in the PDE are at most $O(\fr\eps)$ then substituting for $\eps u_t$ from the PDE yields a
    spatial derivative term of order one. Making this substitution allows spatial derivatives to be
    estimated without requiring estimates of time derivatives, both for systems without large terms
    and in the Klainerman-Majda theory.  However, for the PDE \eqref{eq:3pde} with the scaling
    \eqref{eq:delsmall} this procedure cannot be used because it yields terms of order
    $\frac\eps\del$, which is large. It is therefore necessary to leave the term $\eps u_t$ on the
    right side of the energy estimates for spatial derivatives of $u$, and this necessitates
    estimating time derivatives as well. Similarly, two-scale systems for which $A_0$ depends on $u$
    rather than $\eps u$ also require estimates of time derivatives
    \cite{mr665380,schochet88:asympt}.

  \item In similar fashion, a term containing $\eps u_{tt}$ appears in estimates for a spatial
    derivative of $u_t$ on account of the commutator term $[D^\alp\prt_t,A_0]u_t$. Assuming that
    $u_t$ is bounded initially but $\eps u_{tt}$ is large at time zero, this prevents us from
    obtaining an unweighted estimate for spatial derivatives of $u_t$. The reason we do obtain an
    unweighted estimate for $u_t$ itself is that the commutator term $[\prt_t, A_0]u_t$ does not
    yield any second time derivative.

\item The bound \eqref{eq:eps0c1c2} on how fast $\del$ can tend to zero compared to $\eps$ is only
  needed to ensure that the $\vertiv{\;}_{s_0+1,\eps,A_0}$ norm of the solution is uniformly bounded
  at time zero. The proof of Theorem~\ref{thm:unifexist} therefore also yields uniform bounds for a
  uniform time in the case when the time derivatives of the solution through order $s_0+1$ are
  uniformly bounded at time zero, without the need for assumption \eqref{eq:eps0c1c2} and without
  using weights of powers of $\eps$ in the norms. In particular, taking $\eps\equiv1$ and letting
  $\del\to 0$ yields a proof
  for arbitrary dimensions of the uniform existence theorem stated in \cite{mr665380} but only
  proven there
  in the case $d=1$, for which no Gagliardo-Nirenberg estimates are needed.
  \end{enumerate}
\end{remark}

\section{Convergence}
\subsection{A finite-dimensional perturbation result}
We begin with a result on perturbations of {\clrr  self-adjoint} 
matrices {\clrr ${\mathcal T}(\mu)\eqdef\fr{\mu^p}(T^{(0,0)}+\mu
T^{(0,1)})$, where $\mu$ is a small parameter, and $p$ is a
positive integer.} The result will be used in the proof of the
convergence theorem in Subsection \ref{subs:mainTheorem} ,
where $T^{(0,0)}$ and $ T^{(0,1)}$ will stand for the Fourier symbols of operators $\mathcal L$ and
$\mathcal M$ respectively.
{\clrr 
The result says that there is an orthogonal projection~$\mathcal P(\mu)$ 
that commutes with $\mathcal T(\mu)$, on whose range $\mathcal T(\mu)$ is bounded
uniformly and has a limit as $\mu\to0$, and on whose null space $\mathcal T(\mu)$ is bounded from
below by a constant times $\fr\mu$ and has a finite expansion in inverse powers of $\mu$.
}
\begin{lemma}\label{lem:pert}
Define $\mathcal T(\mu)\eqdef \tfr{\mu^p}T^{(0)}(\mu)\eqdef \tfr{\mu^p}(T^{(0,0)}+\mu T^{(0,1)})$, where
$T^{(0,0)}$ and $T^{(0,1)}$ are operators on a finite dimensional inner-product space~$X$ that are
either both self-adjoint or both skew-adjoint, $\mu$ is a small parameter, and $p$ is a positive integer.
Then
  \begin{enumerate}
  \item There exists an orthogonal projection operator $\mathcal P(\mu)$ that commutes with $\mathcal T(\mu)$
    for $\mu\ne0$, 
   is analytic in $\mu$ for real
    $\mu$, and satisfies
    \begin{align}
      \label{eq:projbnd}
      \|\mathcal P(\mu)\mathcal T(\mu)\mathcal P(\mu) f\|_{X}&\le c_1\|f\|_{X}
\displaybreak[0]\\
\label{eq:projlarge}
  \|(I-\mathcal P(\mu))\mathcal T(\mu)(I-\mathcal P(\mu))f\|_{X}&\ge \tfrac{c_2}{\mu}\|(I-\mathcal P(\mu))f\|_{X}
    \end{align}
for $0<\mu<\mu_0$, where $\|\,\|_{X}$ is the norm on the space~$X$ and $\mu_0$ and the $c_j$ are positive constants.

\item For $0\le j\le p{\clrr -1}$ there exist commuting orthogonal projection operators $P^{(j)}$ 
such that the ranges of the complementary projections $I-P^{(j)}$ are mutually orthogonal subspaces, and
  \begin{equation}
    \label{eq:pform}
    \mathcal P(0)=
\prod_{j=0}^{p-1} P^{(j)}=I-\sum_{j=0}^{p-1} (I-P^{(j)}),
\end{equation}

\item The $P^{(j)}$ are the orthogonal projection operators onto the null spaces of
  operators~$T^{(j,j)}$, which are 
 are determined from $T^{(0)}(\mu)\eqdef T^{(0,0)}+\mu T^{(0,1)}$ via the
 reduction process of \cite[\S II.2.3]{MR678094}: Specifically, after modifying the notation to
 facilitate repeated re{\clrr d}uctions, the $T^{(j,j)}$ are the first term{\clrr s} in the expansions
  \begin{equation}
    \label{eq:tj}
    T^{(j+1)}(\mu)\eqdef \fr{\mu}\widetilde P^{(j)}(\mu)T^{(j)}(\mu)\widetilde P^{(j)}(\mu)=\sum_{k=0}^\infty \mu^k T^{(j+1,j+1+k)}.
  \end{equation}
Here $\widetilde
P^{(j)}(\mu)=\widetilde P^{(j-1)}(\mu) P^{(j)}(\mu)\widetilde P^{(j-1)}(\mu)$ for $j\ge0$,
$P^{(j)}(\mu)$ is the orthogonal projection onto the direct sum of the eigenspaces of $T^{(j)}(\mu)$ of all
eigenvalues of order $o(1)$, and 
$\widetilde P^{(-1)}(\mu)\eqdef I$. The $T^{(j,k)}$ are all self-adjoint when $T^{(0,0)}$ and
$T^{(0,1)}$ are self-adjoint, and are all skew-adjoint when $T^{(0,0)}$ and
$T^{(0,1)}$ are skew-adjoint.

\item
\begin{equation}
\label{eq:limpt}
\lim_{\mu\to0} \mathcal P(\mu)\mathcal T(\mu)
=  T^{(p,p)}.
\end{equation}

\item The operator $T^{(0,0)}$ is given, and for $1\le j\le 2$,
  \begin{align}
    \label{eq:t1}
T^{(1,1)}&= P^{(0)}T^{(0,1)} P^{(0)}
\\
\label{eq:t2}
T^{(2,2)}&=-\widetilde P^{(1)} T^{(0,1)}
\lp T^{(0,0)}\rp^{-1}_{\pseudo}
T^{(0,1)} \widetilde P^{(1)}
\end{align}
where $\widetilde P^{(j)}\eqdef\prod_{\ell=0}^j P^{(j)}$, and
$(M)^{-1}_\pseudo$ denotes the pseudo-inverse of the matrix~$M$, defined by
$\lp C^{-1}\lp\begin{smallmatrix}M_{11}&0\\0&0\end{smallmatrix}\rp C\rp^{-1}_\pseudo\eqdef 
C^{-1}\lp\begin{smallmatrix}M_{11}^{-1}&0\\0&0\end{smallmatrix}\rp C$.

  \end{enumerate}
\end{lemma}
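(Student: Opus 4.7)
The plan is to construct $\mathcal P(\mu)$ via the iterated degenerate perturbation reduction of Kato \cite{MR678094}. Since $T^{(0)}(\mu)=T^{(0,0)}+\mu T^{(0,1)}$ is an analytic linear perturbation of a self-adjoint (or, up to multiplication by $i$, skew-adjoint) matrix on the finite-dimensional space~$X$, Rellich's theorem provides the total eigenprojection $P^{(0)}(\mu)$ associated with those eigenvalues of $T^{(0)}(\mu)$ that tend to $0$ as $\mu\to0$ as an analytic function of $\mu$ for $|\mu|$ small, reducing at $\mu=0$ to the orthogonal projection $P^{(0)}$ onto the null space of $T^{(0,0)}$. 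Because $P^{(0)}T^{(0,0)}=T^{(0,0)}P^{(0)}=0$, the operator $P^{(0)}(\mu)T^{(0)}(\mu)P^{(0)}(\mu)$ vanishes at $\mu=0$, so the reduced operator $T^{(1)}(\mu)=\mu^{-1}P^{(0)}(\mu)T^{(0)}(\mu)P^{(0)}(\mu)$ is again an analytic self-adjoint (respectively skew-adjoint) family on $\mathrm{Range}(P^{(0)}(\mu))$. Iterating this procedure $p$ times on successively nested subspaces yields the projections $P^{(j)}(\mu)$, their limits $P^{(j)}=P^{(j)}(0)$, and the coefficients $T^{(j,k)}$ of \eqref{eq:tj}; I would then set $\mathcal P(\mu)\eqdef\widetilde P^{(p-1)}(\mu)$.

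For part~(1), the identity $\mathcal P(\mu)\mathcal T(\mu)=T^{(p)}(\mu)$ exhibits $\mathcal P(\mu)\mathcal T(\mu)\mathcal P(\mu)$ as analytic, hence uniformly bounded near $\mu=0$, giving \eqref{eq:projbnd}. For \eqref{eq:projlarge} I would decompose $I-\mathcal P(\mu)=\sum_{j=0}^{p-1}\widetilde P^{(j-1)}(\mu)(I-P^{(j)}(\mu))\widetilde P^{(j-1)}(\mu)$ into a sum of mutually orthogonal pieces (with $\widetilde P^{(-1)}(\mu)\eqdef I$). On the $j$th piece the operator $T^{(j)}(\mu)$ has eigenvalues bounded away from zero uniformly in $\mu$, since $P^{(j)}(\mu)$ is the spectral projection onto the $o(1)$ eigenvalues; this translates via $T^{(j)}(\mu)=\mu^{-j}\widetilde P^{(j-1)}(\mu)T^{(0)}(\mu)\widetilde P^{(j-1)}(\mu)$ into the bound $\|T^{(0)}(\mu)v\|\ge c\mu^{j}\|v\|$, and multiplying by $\mu^{-p}$ gives $\|\mathcal T(\mu)v\|\ge c\mu^{j-p}\|v\|$, whose weakest instance $j=p-1$ yields the claimed lower bound $c_2/\mu$. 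Part~(2) then follows because the iteration produces an orthogonal decomposition $X=V_0\oplus V_1\oplus\cdots\oplus V_p$ in which $V_j$ is the range of $\widetilde P^{(j-1)}(0)(I-P^{(j)}(0))\widetilde P^{(j-1)}(0)$; taking $P^{(j)}$ to be the orthogonal projection onto $\bigoplus_{k\ne j}V_k$ yields commuting orthogonal projections with mutually orthogonal complements and verifies \eqref{eq:pform}. Part~(3) is immediate from the construction, and \eqref{eq:limpt} in part~(4) is the limit of $T^{(p)}(\mu)=T^{(p,p)}+O(\mu)$ as $\mu\to0$.

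For part~(5), the formula $T^{(1,1)}=P^{(0)}T^{(0,1)}P^{(0)}$ drops out of the expansion of $P^{(0)}(\mu)T^{(0)}(\mu)P^{(0)}(\mu)$ to first order in $\mu$, since the $O(\mu)$ correction to $P^{(0)}(\mu)$ multiplies either $T^{(0,0)}P^{(0)}$ or $P^{(0)}T^{(0,0)}$, both of which vanish. The main technical obstacle is the derivation of \eqref{eq:t2} for $T^{(2,2)}$: I would use the Kato resolvent formula $P^{(0)}(\mu)=P^{(0)}-\mu\bigl[S_0T^{(0,1)}P^{(0)}+P^{(0)}T^{(0,1)}S_0\bigr]+O(\mu^2)$, where $S_0\eqdef(T^{(0,0)})^{-1}_{\pseudo}$ acts as the genuine inverse of $T^{(0,0)}$ on $\mathrm{Range}(I-P^{(0)})$ and vanishes on $\mathrm{Range}(P^{(0)})$. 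Substituting this into the definition of $T^{(1)}(\mu)$ to extract the $\mu^1$ coefficient $T^{(1,2)}$, then compressing by $\widetilde P^{(1)}$ and invoking both the identities $P^{(1)}T^{(1,1)}=T^{(1,1)}P^{(1)}=0$ (to eliminate every term containing $P^{(0)}T^{(0,1)}P^{(0)}$ adjacent to $\widetilde P^{(1)}$) and $S_0\widetilde P^{(1)}=\widetilde P^{(1)}S_0=0$ (to eliminate further cross-terms), leaves precisely the expression in \eqref{eq:t2}. Tracking exactly which cross-terms survive this compression, and in particular the precise sign produced by second-order perturbation theory, is the delicate part of the argument.
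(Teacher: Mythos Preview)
Your proposal is correct and follows essentially the same route as the paper: both rely on Kato's iterated reduction process for self-adjoint analytic perturbations, extract the projections $P^{(j)}(\mu)$ onto the $o(1)$-eigenspaces at each stage, and read off the coefficients $T^{(j,k)}$ from the resulting expansions. The only cosmetic differences are that the paper \emph{defines} $\mathcal P(\mu)$ via the resolvent contour integral $-\tfrac1{2\pi i}\int_{|z|=\mu^{p-1/2}}R(z,\mu)\,dz$ and then identifies it with the iterated projection $\prod_{j=0}^{p-1}P^{(j)}(\mu)$, whereas you take the iterated projection $\widetilde P^{(p-1)}(\mu)$ as the definition; and for part~(5) the paper simply invokes Kato's general recursion \cite[(2.18) in \S II.2.2]{MR678094} for the $T^{(j+1,j+n)}$, while you carry out the equivalent second-order resolvent expansion by hand.
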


\begin{proof}
  If both $T^{(0,0)}$ and $T^{(0,1)}$ are skew-adjoint then multiplying both of them by $i$ makes
  them self-adjoint without affecting the projections, so we may assume that they are
  self-adjoint. Moreover, as noted in \cite[\S II.6.1]{MR678094}, the reduction process preserves
  self-adjointness and so may be continued without limitation, since the nilpotent factors of the
  general case are absent. In particular, by \cite[Theorem 6.1 in \S II.6.1]{MR678094} the
  eigenvalues of $T^{(j)}(\mu)$ and the projection operators $P^{(j)}(\mu)$ are all analytic for
  real~$\mu$. As in \cite[\S II.1.3]{MR678094} let $R(z,\mu)$ denote the resolvent
  $(T^{(0)}(\mu)-z)^{-1}$ wherever $z$ is not an eigenvalue of $T^{(0)}(\mu)$.  In view of the
  analyticity of the eigenvalues, \cite[(1.16) in \S II.1.4]{MR678094} shows that for $\mu$
  sufficiently small the operator 
\begin{equation*}
\mathcal P(\mu)=-\tfr{2\pi
    i}\int_{|z|=\mu^{p-\half}}R(z,\mu)\,dz
\end{equation*}
is the orthogonal projection onto the direct sum of the
  eigenspaces of the eigenvalues of $\mathcal T(\mu)$ of size at most $O(1)$, and $I-\mathcal
  P(\mu)$ is the orthogonal projection onto the direct sum of the eigenspaces of the eigenvalues of
  $\mathcal T(\mu)$ of size at least $O(\mu^{-1})\gg1$. These estimates show that
  \eqref{eq:projbnd}\textendash\eqref{eq:projlarge} hold.

We carry out the reduction process of \cite[\S II.2.3]{MR678094} while choosing the unperturbed
eigenvalue zero at every stage. However, we do not want to include the range of $I-P^{(j-1)}$ when
considering the zero eigenspace of $T^{(j,j)}$ since that subspace has already been accounted for at
previous stages of the reduction process. For this reason we replace the factor $P^{(j)}(\mu)$,
which would appear in \eqref{eq:tj} if the corresponding formula \cite[(2.37) in \S
II.2.3]{MR678094} were simply rewritten in our notation, with $\widetilde P^{(j)}(\mu)$. This
corresponds to the suggestion in \cite[\S II.2.3]{MR678094} to add a constant multiple of
$I-P^{(j-1)}$ to $T^{(j,j)}$ but without the need to modify that operator. This procedure yields \eqref{eq:tj}. 

Since $\mathcal T(\mu)$, after multiplication by $i$ if necessary, is self-adjoint for all $\mu$, formula
\eqref{eq:tj} implies that all the $T^{(j,k)}$ are then also self-adjoint after that multiplication
has been done if necessary.
If at any stage of the reduction process the new unperturbed
operator $T^{(j+1,j+1)}$ does not have any zero eigenspace except for the range of $(I-P^{(j)})$, then
$P^{(j+1)}$ and hence also $\mathcal P(0)$ is identically zero, and if $j+1<p-1$ then $T^{(k)}(\mu)$ is
simply the zero operator for $j+1<k\le p-1$.

By the construction of the reduction process, $I-P^{(j)}(\mu)$ is the orthogonal projection onto the
direct sum of the eigenspaces of $\mathcal T(\mu)$ whose eigenvalues are of size
$O(\mu^{j-p})$. Since the eigenvalues for different values of $j$ are distinct for small enough
$\mu$, and $\mathcal T(\mu)$ is self-adjoint, the ranges of
$I-P^{(j)}(\mu)$ for different values of $j$ are orthogonal to
each other. This implies that the $I-P^{(j)}(\mu)$ for different $j$ commute with each other, and hence
so do the $P^{(j)}(\mu)$. Since $I-\mathcal P(\mu)$ is the orthogonal projection onto the union over $0\le j\le
p-1$ of the eigenspaces of $\mathcal T(\mu)$ whose eigenvalues are of size
$O(\mu^{j-p})$, and those eigenspaces are orthogonal for distinct $j$, 
\begin{equation}\label{eq:imp}
I-\mathcal P(\mu)=\sum_{j=0}^{p-1} (I-P^{(j)}(\mu)).
\end{equation}
In addition, since the $(I-P^{(j)}(\mu))$ project onto mutually orthogonal subspaces,
$(I-P^{(j_1)}(\mu))(I-P^{(j_2)}(\mu))=0$ for $j_1\ne j_2$, which implies that
\begin{equation}
  \label{eq:prodsum}
\begin{aligned}
  \prod_{j=0}^{p-1} &P^{(j)}(\mu)=\prod_{j=0}^{p-1}(I-(I-P^{(j)}(\mu))
\\& =I-\sum_{j=0}^{p-1}  (I-P^{(j)}(\mu))+\text{terms with at least two distinct factors $(I-P^{(j_k)}(\mu))$}
\\&=I-\sum_{j=0}^{p-1} (I-P^{(j)}(\mu)).
\end{aligned}
\end{equation}
Since the eigenvalues of size $O(1)$ of $T^{(j)}(\mu)$ are the perturbations of the nonzero
eigenvalues of $T^{(j,j)}$, the continuity of the projections $P^{(j)}(\mu)$ shows that as $\mu$
tends to zero the orthogonal projection $I-P^{(j)}(\mu)$ onto the direct sum of the eigenspaces of
eigenvalues of $T^{(j)}(\mu)$ that are $O(1)$ tends to the orthogonal projection $I-P^{(j)}$ onto
the direct sum of the eigenspaces of eigenvalues of $T^{(j,j)}$ that are nonzero. This shows
that 
\begin{equation}\label{eq:pjmutopj}
 P^{(j)}(\mu)\to P^{(j)}\quad \text{as}\quad \mu\to0 
\end{equation}
in the strong (finite-dimensional) operator topology, 
which is isometric to a suitably normed matrix space. Therefore,  taking the limit of
\eqref{eq:imp}\textendash\eqref{eq:prodsum} and rearranging yields \eqref{eq:pform}.  Taking the
limit of the identities $P^{(j)}(\mu)P^{(k)}(\mu)=P^{(k)}(\mu)P^{(j)}(\mu)$ yields
$P^{(j)}P^{(k)}=P^{(k)}P^{(j)}$, and the orthogonality of the ranges of $I-P^{(j)}(\mu)$ imply the
orthogonality of the ranges of $I-P^{(j)}$. This also shows that $P^{(j)}$ is the orthogonal
projection onto the null space of $T^{(j,j)}$, where the $T^{(j,j)}$ are the first terms in the
expansions~\eqref{eq:tj}.

Since $\mathcal P(\mu)$ is the orthogonal projection onto the direct sum of the eigenspaces of
$\mathcal T(\mu)$ that are $O(1)$ or $o(1)$, continuing the reduction process one more step
yields  \eqref{eq:limpt}.

The formulas for the $T^{(j,j)}$ are obtained by using recursively formula \cite[(2.18)  
in \S II.2.2]{MR678094}, which in our notation  becomes, for the case here in which there are no nilpotents,
\begin{gather}\label{eq:tjj}
  T^{(j+1,j+n)}=-\sum_{r=1}^{n} (-1)^r
  \sum_{\substack{\sum_{\ell=1}^r\nu_\ell=n\\\sum_{\ell=1}^{r+1}k_\ell=r-1\\\nu_\ell\ge1, k_\ell\ge0}}
  S^{(j,k_1)}T^{(j,j+\nu_1)}S^{(j,k_2)}\cdots S^{(j,k_r)}T^{(j,j+\nu_r)}S^{(j,k_r+1)},
\\
\label{eq:sform}
S^{(j,0)}\eqdef -P^{(j)},\quad S^{(j,\ell)}\eqdef
\lp \lp T^{(j,j)}\rp^{-1}_\pseudo\rp^\ell \text{ for $\ell\ge1$.}
\end{gather}
In particular, for $j=0$ and $n=1$ only the term with $r=1$ is present in the outer sum in \eqref{eq:tjj},
and the inner sum then contains only the case where $\nu_1=1$ and $k_1=0=k_2$. Using
\eqref{eq:sform}, this yields \eqref{eq:t1}. An analogous but longer calculation yields \eqref{eq:t2}.
\qed\end{proof}

\begin{remark}
\begin{enumerate}
\item Although $T^{(0,k)}\equiv0$ for $k\ge2$, $T^{(j,k)}$ may be nonzero for arbitrarily large
  values of $k$ when $j\ge 1$.
\item Formula \eqref{eq:tjj} shows that in order to calculate $T^{(2,2)}$ 
it is necessary to first calculate $T^{(1,1)}$ and $T^{(1,2)}$, while in
  order to calculate $T^{(3,3)}$ it would be necessary to first calculate $T^{(1,j)}$ for $1\le j\le
  3$ and then
  $T^{(2,j)}$ for $2\le j\le 3$. 
\end{enumerate}
\end{remark}

\begin{example}\label{eg:tjj}
  \begin{enumerate}
   \item\label{item:1}   
In the application of Lemma~\ref{lem:pert} to the convergence theorem the operators $T^{(0,0)}$
     and $T^{(0,1)}$ will be individual Fourier modes of the operators $\mathcal L$ and $\mathcal M$
     from \eqref{eq:3pde}. For example, if 
$\mathcal L=\lp\begin{smallmatrix}\prt_x&0\\0&0
     \end{smallmatrix}\rp$ and $\mathcal M=\lp\begin{smallmatrix}0&\prt_y\\ \prt_y&0
     \end{smallmatrix}\rp$ then $T^{(0,0)}=\lp\begin{smallmatrix}i k&0\\0&0
     \end{smallmatrix}\rp$ and $T^{(0,1)}=\lp\begin{smallmatrix}0&i\ell \\ i\ell&0
     \end{smallmatrix}\rp$ for some fixed values of $k$ and $\ell$. When $k\ne0$ 
     the projection onto the null space of $T^{(0,0)}$ is 
   $P^{(0)}=\lp\begin{smallmatrix}0&0\\0&1 \end{smallmatrix}\rp$, and
 formulas \eqref{eq:t1} and \eqref{eq:t2} yield $T^{(1,1)}=P^{(0)}T^{(0,1)}P^{(0)}=0$ and
$T^{(2,2)}= \lp\begin{smallmatrix}0&0\\0&\frac{-i \ell^2}k
     \end{smallmatrix}\rp$ since $P^{(1)}=I$ so $\widetilde P^{(1)}=P^{(0)}P^{(1)}=P^{(0)}$. If
     $\ell\ne0$ then
   $T^{(0,0)}$ and $T^{(2,2)}$ each have one nonzero eigenvalue so the fact that the matrices
     are of size $2\times2$ implies that $T^{(j,j)}=0$ for $j>2$, while if $\ell=0$ then
     $T^{(j,j)}=0$ for $j\ge 2$. When $k=0$
     but $\ell$ is nonzero then $T^{(0,0)}=0$, $P^{(0)}=I$, $T^{(1,1)}=T^{(0,1)}$, $P^{(1)}=0$, and $T^{(j,j)}=0$ for $j>1$,
     while when both $k$ and $\ell$ vanish then, for all $j$, $T^{(j,j)}=0$ and $P^{(j)}=I$.

   \item\label{item:2} The operators $\mathcal L$ and $\mathcal M$ in \eqref{eq:3pde} are allowed to have order
     zero, i.e., to be simply multiplication by fixed matrices, and then the operators in the lemma
     are simply the same operators. For example,
     \begin{equation}
       T^{(0,0)}=\mathcal L=
       \begin{pmatrix}
         0& 1& 0&0&0\\-1&0&0&0&0\\0&0&0&0&0\\0&0&0&0&0\\0&0&0&0&0
       \end{pmatrix},\qquad
  T^{(0,1)}=\mathcal M=
       \begin{pmatrix}
         0& 0& 0&a&b\\0&0&0&c&d\\0&0&i m&0&0\\-a&-c&0&0&0\\-b&-d&0&0&0
       \end{pmatrix}.
     \end{equation}
For these operators, 
\begin{gather*}
T^{(1,1)}=\begin{pmatrix}
         0& 0& 0&0&0\\0&0&0&0&0\\0&0&im&0&0\\0&0&0&0&0\\0&0&0&0&0
       \end{pmatrix}
\qquad\text{and}\qquad
T^{(2,2)} =\begin{pmatrix}
         0& 0& 0&0&0\\0&0&0&0&0\\0&0&0&0&0\\0&0&0&0&bc-ad\\0&0&0&ad-bc&0
       \end{pmatrix}.
\end{gather*}
When $ad-bc\ne0$ then all eigenvalues of $T^{(0,0)}+\mu T^{(0,1)}$ have been accounted for, so
$T^{(j,j)}=0$ for $j>2$. On the other hand, when $ad-bc=0$ then zero
is an eigenvalue of $T^{(0,0)}+\mu T^{(0,1)}$ with multiplicity two for all $\mu$ (with the
eigenvectors being $\lp\begin{smallmatrix} 0 &0 &0 &-b &a
\end{smallmatrix}\rp^T$ and $\lp\begin{smallmatrix}-c \mu &a\mu & 0& -(1+b) &a
\end{smallmatrix}\rp^T$ when $a\ne0$), so $T^{(j,j)}=0$ for $j>1$.
  \end{enumerate}
\end{example}

\subsection{Theorem and proof}\label{subs:mainTheorem}
The following projections and operator will appear in the statement and proof of the convergence
theorem. We assume that either \eqref{eq:delepsrel1} or \eqref{eq:delepsrel2} holds for some integer $s\ge
s_0$.

Let $\mathcal L$ and $\mathcal M$ be operators satisfying the conditions of
Assumption~\ref{sum:hyp}. Let $\widehat f(k)$ denote the Fourier transform of $f$ on $\RR^d$ or
$\TT^d$ and let $(g(k))\unhat$ denote the corresponding inverse Fourier transform of $g(k)$. 
Since $\mathcal L$ and $\mathcal M$ are constant-coefficient operators
there exist functions~$\widehat{\mathcal L}(k)$ and $\widehat{\mathcal M}(k)$ such that
$\widehat{\mathcal L f}=\widehat{\mathcal L}\widehat f$ and 
$\widehat{\mathcal M f}=\widehat{\mathcal M}\widehat f$. 

\begin{definition}\label{def:proj}
For any $k$, let $\widehat{\PP}(k)$ and $\widehat{\PP(\mu)}(k)$
denote the projections $\mathcal P(0)$ and $\mathcal P(\mu)$,
respectively, from Lemma~\ref{lem:pert}, where $p=s+1$ when \eqref{eq:delepsrel1} holds or $p=s+2$ when
\eqref{eq:delepsrel2} holds, $T^{(0,0)}\eqdef
\widehat{\mathcal L}(k)$, and $T^{(0,1)}\eqdef \widehat{\mathcal M}(k)$. Define the projection $\PP$
by $\PP f=(\widehat{\PP}(k)\widehat f(k))\unhat$, and the projection $\PP(\mu)$ by
$\PP(\mu) f=(\widehat{\PP(\mu)}(k)\widehat f(k))\unhat$. In addition, when \eqref{eq:delepsrel1}
holds then let $\widehat{T_{\lim}}(k)=C^s T^{(p,p)}$ ,   where $C$ is the constant
from~\eqref{eq:delepsrel1} and $T^{(p,p)}$ is from Lemma~\ref{lem:pert} with
$T^{(0,0)}$, $T^{(0,1)}$, and $p$ 
as mentioned above, and define the operator $T_{\lim}$ by 
$T_{\lim}f=(\widehat{T_{\lim}}(k)\widehat f(k))\unhat$. 
However, when \eqref{eq:delepsrel2} holds then define $T_{\lim}=0$.
\end{definition}

\begin{remark}\label{rem:tlim}
  Since $\widehat{\PP}(k)$ is an orthogonal projection for each $k$ and hence bounded by one, $\PP$
  is an orthogonal projection on $L^2$ and a bounded operator on $H^s$ for all $s$. In contrast,
  although $\widehat{T_{\lim}}(k)$ is a bounded operator for each $k$ the operator
  $T_{\lim}$ may be unbounded.  By Lemma~\ref{lem:pert},  $\widehat{T_{\lim}}(k)$ is skew-adjoint for each $k$
  so  $T_{\lim}$ is anti-symmetric. 
\end{remark}

\begin{theorem}
  Assume that the conditions of Theorem~\ref{thm:unifexist} hold, that $\del$ and $\eps$ 
tend to zero while obeying
either \eqref{eq:delepsrel1} or \eqref{eq:delepsrel2} for some integer $s\ge s_0$, 
and that $u_0(x,\del,\eps)$
  converges in $H^{s_0+1}$ to $u_{0,0}(x)$ in that limit.

Then the solution $u(t,x,\del,\eps)$ of the initial-value problem \eqref{eq:3pde},
$u(0,x,\del,\eps)=u_0(x,\del,\eps)$ converges to the unique solution~$U(t,x)$ 
belonging to  $L^\infty([0,T];H^{s_0+1})\cap \Lip([0,T];L^2)$ of 
\begin{align}
  \label{eq:limpde}
  \PP\big[ A_0(0)U_t&+\sum_{j=1}^d A_j(U)U_{x_j}+T_{\lim}U-F(t,x,U)\big]=0,
\\\label{eq:strain}
 \mathcal (I-\PP)U&=0
\\
\label{eq:liminit}
U(0,x)&=u_{0,0}(x),
\end{align}
where $\PP$ is the orthogonal projection operator from
Definition~\ref{def:proj} and $T_{\lim}$ is the operator defined there.
\end{theorem}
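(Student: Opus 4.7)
The plan follows the classical three-step template for singular-limit convergence—uniform compactness, identification of the limit system, and uniqueness of solutions to the limit system—while exploiting the $\mu$-dependent orthogonal projection $\PP(\mu)$ of Definition~\ref{def:proj} (with $\mu\eqdef\del/\eps$) to decompose $u$ into the ``slow'' part $\PP(\mu)u$, on which the singular operator $\tfr\del\mathcal L+\tfr\eps\mathcal M=\tfrac{\mu^p}{\del}\mathcal T(\mu)$ is uniformly bounded by~\eqref{eq:projbnd}, and the ``fast'' part $(I-\PP(\mu))u$, on which \eqref{eq:projlarge} shows it is bounded below. From any sequence $(\del_n,\eps_n)\to0$ satisfying the scaling assumption, Theorem~\ref{thm:unifexist} and an Aubin--Lions argument extract a subsequence and a limit $U\in L^\infty([0,T];H^{s_0+1})\cap\Lip([0,T];L^2)$ with $u_n\to U$ strongly in $C([0,T];H^{s'}_{\mathrm{loc}})$ for every $s'<s_0+1$, and $U(0,\cdot)=u_{0,0}$ follows from the $H^{s_0+1}$ convergence of the initial data.

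Next I would apply $I-\PP(\mu)$ to the PDE~\eqref{eq:3pde} and use the fact that $\PP(\mu)$ commutes with $\mathcal L+\mu\mathcal M$. Everything except the singular term is then uniformly bounded in $L^2$, while \eqref{eq:projlarge} yields the upper bound $\|(I-\PP(\mu))u\|_{L^2}=O(\del/\mu^{p-1})$. A direct computation shows $\mu^{p-1}/\del\to\infty$ under both \eqref{eq:delepsrel1} and \eqref{eq:delepsrel2}, so $(I-\PP(\mu))u\to0$ in $L^2$; the pointwise convergence $\widehat{\PP(\mu)}(k)\to\widehat{\PP}(k)$ given by Lemma~\ref{lem:pert} then yields \eqref{eq:strain}. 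Applying $\PP(\mu)$ to \eqref{eq:3pde} produces
\begin{equation*}
\PP(\mu)\Big[A_0(\eps u)u_t+\sum_{j=1}^d A_j(u)u_{x_j}-F(t,x,u)\Big]+\tfrac{\mu^p}{\del}\,\PP(\mu)\mathcal T(\mu)\PP(\mu)u=0.
\end{equation*}
The nonlinear terms pass to the limit via strong compactness of $u_n$, with $A_0(\eps u)\to A_0(0)$ since $\eps u\to0$, while the large term is handled by combining \eqref{eq:limpt}, which says $\PP(\mu)\mathcal T(\mu)\to T^{(p,p)}$, with the scaling identities $\mu^p/\del\to C^s$ in case \eqref{eq:delepsrel1} and $\mu^p/\del\to 0$ in case \eqref{eq:delepsrel2}; this produces $T_{\lim}U$ in the first case and $0$ in the second, yielding~\eqref{eq:limpde}.

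Uniqueness of \eqref{eq:limpde}--\eqref{eq:liminit} then reduces to standard symmetric-hyperbolic energy estimates on the range of $\PP$, using the anti-symmetry of $T_{\lim}$ from Remark~\ref{rem:tlim} so that it contributes nothing to the $L^2$ energy; uniqueness lifts the subsequential convergence to convergence of the full family as $\del,\eps\to0$ along the scaling. The hard part will be the passage to the limit in the term $\tfrac{\mu^p}{\del}\,\PP(\mu)\mathcal T(\mu)\PP(\mu)u$: because $T_{\lim}$ may be an unbounded Fourier multiplier (Remark~\ref{rem:tlim}), this convergence cannot be established uniformly in the frequency variable and must be carried out mode-by-mode in Fourier space, using the uniform $H^{s_0+1}$ bound on $u$ to dominate high frequencies and the analyticity in $\mu$ from Lemma~\ref{lem:pert} to identify the limiting symbol; this is also the reason why the regularity claimed for $U$ in the theorem is only $L^\infty([0,T];H^{s_0+1})\cap\Lip([0,T];L^2)$.
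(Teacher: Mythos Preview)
Your overall strategy matches the paper's exactly: compactness from the uniform bound, decomposition via $\PP(\mu)$ and $I-\PP(\mu)$, identification of the limit system, and uniqueness to upgrade subsequential convergence. Your treatment of the constraint \eqref{eq:strain} and of uniqueness is essentially correct.

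There is, however, a genuine gap in your plan for the singular term $\tfrac{\mu^p}{\del}\,\PP(\mu)\mathcal T(\mu)\PP(\mu)u$. You propose to pass to the limit mode-by-mode and ``dominate high frequencies'' using the uniform $H^{s_0+1}$ bound on $u$. This will not work in general: the bound \eqref{eq:projbnd} is pointwise in the Fourier variable $k$, and the constant $c_1=c_1(k)$ can grow faster than any polynomial in $|k|$. Indeed, as noted in Example~4.4(2) of the paper, the limiting symbol $\widehat{T_{\lim}}(k)$ may fail to be bounded by $(|k|+|\ell|)^3$ even in the periodic case (for Liouville-type irrational coefficients), so no Sobolev weight dominates it, and a Vitali-type argument based on uniform integrability fails.

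The paper's device avoids this difficulty entirely. Having already shown that the \emph{nonlinear} part $\PP(\mu)\big[A_0(\eps u)u_t+\sum_j A_j(u)u_{x_j}-F\big]$ converges in $L^2$ (strong convergence of $u$ plus strong operator convergence $\PP(\mu)\to\PP$), one reads off from the projected equation that the singular term must \emph{also} converge in $L^2$, to some limit. Separately, \eqref{eq:limpt} gives pointwise-in-$k$ convergence of the Fourier transform to $\widehat{T_{\lim}}\widehat U$. Since an $L^2$ limit and a pointwise limit must agree, the $L^2$ limit is $T_{\lim}U$. This two-step identification (transfer $L^2$ convergence through the equation, then match with the pointwise symbol limit) is precisely what replaces your domination argument, and it requires no growth control on $\widehat{T_{\lim}}$ whatsoever.
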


\begin{proof}
The uniform bound for the $\vertiv{\;}_{s_0+1,\eps,A_0}$ norm of the solution of \eqref{eq:3pde},
proven in Theorem~\ref{thm:unifexist} shows that 
$\max_{0\le t\le T}[\|u(t,\cdot)\|_{s_0+1}^2+\|u_t\|_0^2]^{1/2}\le 2M$, where $T$ and $M$ are as in
Theorem~\ref{thm:unifexist}. By Ascoli's theorem plus the weak-$*$ compactness of
$L^\infty([0,T];H^{s_0+1})$, for every sequence of values of $\del$ and $\eps$ tending to zero while
satisfying~\eqref{eq:eps0c1c2} there is a subsequence converging weak-$*$ in
$L^\infty([0,T];H^{s_0+1})$ and strongly in $C^0([0,T];L^2)$ to a limit $U(t,x)$ in
$L^\infty([0,T];H^{s_0+1})\cap \Lip([0,T];L^2)$. In particular, this convergence together with the
assumption on the convergence of the initial data show that \eqref{eq:liminit} holds. 

By interpolation between Sobolev spaces, the convergence and bounds obtained so far
imply that the subsequence also converges to $U$ in $C^0([0,T];H^{s_0+1-\mu})$ for any $\mu>0$,
and hence also in $C^0([0,T];C^1)$. This yields the convergence in at least $L^2$ of 
$A_0(\eps u)u_t+\sum_{j=1}^d A_j(u)u_{x_j}-F(t,x,u)$ to $A_0(0)U_t+\sum_j A_j(U)U_{x_j}-F(t,x,U)$.

Now apply the projection $\PP(\mu)$ from Definition~\ref{def:proj} with $\mu=\frac\del\eps$ 
to the PDE~\eqref{eq:3pde}, which yields
\begin{equation}
  \label{eq:p3pde}
  \tfr{\del}\PP(\tfrac{\del}{\eps})(\mathcal L+\tfrac{\del}{\eps}\mathcal M)u
=-\PP(\tfrac\del\eps)\lb A_0(\eps u)u_t+\sum_j A_j(u)u_{x_j}-F(t,x,u)\rb.
\end{equation}
As noted above, the expression in brackets on the right side of \eqref{eq:p3pde} converges in
$C^0([0,T];L^2)$ as $\del$ and $\eps$ tends to zero in the manner stated in the theorem.
Since $\mu\eqdef \frac\del\eps$ tends to zero in that limit,
the projection $\PP(\frac\del\eps)$ converges in the strong operator topology to $\PP$ in that limit
since the Fourier transform of the former is uniformly bounded and converges pointwise to the
Fourier transform of the latter, so for any $f\in L^2$, $\|[\PP(\mu)-\PP]f\|_{L^2}^2=\int
|[\widehat{\PP(\mu)}(k)-\widehat \PP(k)]\widehat f(k)|^2\,dk$ (or that expression with the integral
replaced by a sum if the spatial domain is periodic) tends to zero by \eqref{eq:pjmutopj}, 
the Dominated Convergence
Theorem and the fact that orthogonal projection operators do not increase vector length. 
Hence the entire right side of \eqref{eq:p3pde} converges in the above limit to $\PP\lb
A_0(0)U_t-\sum_j A_j(U)U_{x_j}-F(t,x,U)\rb$. This implies that the left side of \eqref{eq:p3pde}
also converges. 

When \eqref{eq:delepsrel1} holds then that relation plus the definition $p=s+1$ from
Definition~\ref{def:proj} imply that $\frac{\mu^p}{\del}=C^{p-1}(1+o(1))$. Hence
Lemma~\ref{lem:pert} shows that 
\begin{equation}\label{eq:pdelt}
[\PP(\mu)(\tfr{\del}(\mathcal
L+\mu\mathcal M)f]\dohat=C^{p-1}T^{(p,p)}\widehat f+o(1)=\widehat{T_{\lim}}\widehat f+o(1).
\end{equation}
Although the Fourier transform of $T_{\lim}$ may be unbounded as a function of the Fourier transform
variable, \eqref{eq:pdelt} together with the convergence of $u$ to $U$ shows that the Fourier
transform of the left side of \eqref{eq:p3pde} converges pointwise to the Fourier transform of
$T_{\lim}U$. The fact that that left side is known to converge in $L^2$ implies that its Fourier transform
also converges in $L^2$. Since the pointwise and $L^2$ limits of a sequence of functions must
coincide when both exist, the Fourier transform of the left side of \eqref{eq:p3pde} tends in
$L^2$ to the Fourier transform of $T_{\lim}U$, and hence that left side tends to $T_{\lim}U$.  The
reduction process also shows that the Fourier transform of $T_{\lim}$ is in the image of
$\widehat{\PP}(k)$ for each $k$, so rearranging the limit of \eqref{eq:p3pde} yields
\eqref{eq:limpde}. When \eqref{eq:delepsrel2} holds instead of \eqref{eq:delepsrel1} then that
relation plus the definition $p=s+2$ from Definition~\ref{def:proj} imply that
$\frac{\mu^p}{\del}=o(1)$, so \eqref{eq:pdelt} holds with $C$ replaced by zero, and again leads to
\eqref{eq:limpde} but with $T_{\lim}=0$.

Now define 
$\widehat{\mathcal T(\del,\eps)}(k)\eqdef \tfr{\del}(\widehat{\mathcal L}(k)+\tfrac\del\eps \widehat{\mathcal
  M}(k))$. From the Fourier transform of  \eqref{eq:3pde}, \eqref{eq:delepsrel1} or
\eqref{eq:delepsrel2}, and Lemma~\ref{lem:pert}, 
\begin{equation}
  \label{eq:4imphatu}
  \la\lp I-\widehat{\PP(\tfrac\del\eps)}\rp\widehat u(k)\ra
\le c z(\del,\eps)
\la\lp I-\widehat{\PP(\tfrac\del\eps)}\rp\widehat{\mathcal T(\del,\eps)}(k)
\lp I-\widehat{\PP(\tfrac\del\eps)}\rp\widehat u(k)\ra,
\end{equation}
where $z(\del,\eps)=\mu=\eps^{\fr s}$ by \eqref{eq:projlarge} when \eqref{eq:delepsrel1} holds, and 
$z(\del,\eps)=\frac\del{\mu^p} \cdot {\mu}=\lp\frac{\eps^{1+\fr s}}\del\rp^s$ by the definition of
$\widehat{\mathcal T(\del,\eps)}$ plus \eqref{eq:projlarge} and the definition of $p$ in terms of
$s$ when \eqref{eq:delepsrel2} holds.
In the former case $\eps^{\fr s}$ clearly tends to zero with $\eps$, and in the latter case 
$\lp\frac{\eps^{1+\fr s}}\del\rp^s$ tends to zero by \eqref{eq:delepsrel2}, i.e., in either case
$z(\del,\eps)\to0$ as $\del$ and $\eps$ tend to zero. Since
\begin{equation*}
\begin{aligned}
\bigg|\lp I-\widehat{\PP(\tfrac\del\eps)}\rp\widehat{\mathcal T(\mu)}(k)
\lp I-\widehat{\PP(\tfrac\del\eps)}\rp&\widehat u(k)\bigg|
=
\la\lp I-\widehat{\PP(\tfrac\del\eps)}\rp\widehat{\mathcal T(\mu)}(k)
\widehat u(k)\ra
\\&\le 
\la\widehat{\mathcal T(\mu)}(k)
\widehat u(k)\ra
\\&=
\Big|\Big( A_0(\eps u)u_t+\sum_{j=1}^d A_j(u)u_{x_j}-F(t,x,u)\Big)\dohat(k)\Big|
\\&=O(1),
\end{aligned}
\end{equation*}
taking the limit of \eqref{eq:4imphatu} as $\del$ and $\eps$ tend to zero while satisfying
\eqref{eq:delepsrel1} or \eqref{eq:delepsrel2}
shows that $\lp I-\widehat{\PP}(k)\rp \widehat U(k)=0$, which implies \eqref{eq:strain}.

To show that a solution of the given smoothness of \eqref{eq:limpde}\textendash\eqref{eq:liminit} is
unique, let $\QQ_R$ be the projection onto
the Fourier modes for which $\widehat{T_{\lim}}(k)$ is bounded by $R$.
Since $\widehat{T_{\lim}}(k)$
is finite for each $k$, the limit as $R\to\infty$ of $\QQ_R$ is the identity operator. Since $\QQ_R$ is a
projection onto Fourier modes it commutes with $T_{\lim}$, and $\PP$. Hence, for any $R$,
\begin{equation*}
\ip{\QQ_RU}{\PP T_{\lim}U}=\ip{\QQ_R\PP U}{T_{\lim}U}=\ip{\QQ_RU}{T_{\lim}U}=\ip{\QQ_RU}{T_{\lim}\QQ_RU}=0
\end{equation*}
by \eqref{eq:strain} plus the antisymmetry of~$T_{\lim}$. 
Taking the difference of \eqref{eq:limpde} for two solutions $U^{(1)}$ and $U^{(2)}$, defining
$U\eqdef U^{(1)}-U^{(2)}$, taking the $L^2$ inner product of the result with $\QQ_RU$ and letting $R$
tend to infinity therefore yields
\begin{equation}
\begin{aligned}\label{eq:Udiffest}
  0&=\lim_{R\to\infty}\Big<\QQ_RU,\PP \Big( A_0(0)U_t+\sum_j A_j(U^{(1)})U_{x_j}+\sum_j \lB
    A_j(U^{(2)}+U)-A_j(U^{(2)})\rB U^{(2)}_{x_j} 
\\&\qquad\qquad\qquad\qquad\qquad +T_{\lim}U-\lb F(t,x,U^{(2)}+U)-F(t,x,U^{(2)})\rb\Big)\Big>
\\&=\ip{U}{\PP(A_0(0)U_t+\sum_j A_j(U^{(1)})U_{x_j}+M(t,x,U^{(2)},\grad_x U^{(2)})U)}
\\&=\ip{\PP U}{A_0(0)U_t+\sum_j A_j(U^{(1)})U_{x_j}+M(t,x,U^{(2)},\grad_x U^{(2)})U}
\\&=\ip{U}{A_0(0)U_t+\sum_j A_j(U^{(1)})U_{x_j}+M(t,x,U^{(2)},\grad_x U^{(2)})U},
\end{aligned}
\end{equation}
where 
\begin{equation*}
\begin{aligned}
  &\sum_j \lB A_j(U^{(2)}+U)-A_j(U^{(2)})\rB U^{(2)}_{x_j}-\lb F(t,x,U^{(2)}+U)-F(t,x,U^{(2)})\rb
\\&=\int_0^1 \dd{s} 
\sum_j A_j(U^{(2)}+s U)U^{(2)}_{x_j}- F(t,x,U^{(2)}+sU)\,ds 
\\&=\lB \int_0^1 \sum_j \grad_v \lb A_j(v)U^{(2)}_{x_j}-F(t,x,v)\rB_{v=U^{(2)}+s U}\,ds \rB U
\\&\eqdef M(t,x,U^{(2)},\grad_x U^{(2)}) U.
\end{aligned}
\end{equation*}
Since the final expression in \eqref{eq:Udiffest} looks like the $L^2$ estimate for a symmetric
hyperbolic system, we obtain
\begin{equation}\label{eq:Ul2est}
\begin{aligned}
  0&=\tddt\lp\half\ip{U}{A_0(0)U}\rp-\ip{U}{\lB\sum_j \prt_{x_j}A_j(U^{(1)})+M(t,x,U^{(2)},\grad_x
    U^{(2)})\rB U}
\\&\ge \tddt \lp\half \ip{U}{A_0(0)U}\rp-K_1\ip{U}{U}
\\&\ge 
\tddt \lp\half \ip{U}{A_0(0)U}\rp-K_2\lp\half\ip{U}{A_0(0)U}\rp
\end{aligned}
\end{equation}
for some $K_1$ and $K_2$ depending on the $\|\,\|_{H^{s_0+1}}$ norms of $U^{(1)}$ and $U^{(2)}$ and
the constant $c_0$ from \eqref{eq:c0b0}. Estimate \eqref{eq:Ul2est} plus the initial
condition~\eqref{eq:liminit} imply that
\begin{equation*}
\half \ip{U}{A_0(0)U}\le (\half \ip{U(0)}{A_0(0)U(0)})e^{kt}=0,
\end{equation*}
which implies that $U\equiv0$, i.e., $U^{(1)}\equiv U^{(2)}$, yielding uniqueness.

As usual, the uniqueness of the limit implies that convergence holds as $\del$ and $\eps$
tend to zero while satisfying \eqref{eq:delepsrel1} or \eqref{eq:delepsrel2} 
without restricting to a subsequence. 
\qed\end{proof}

\begin{example}
  \begin{enumerate}
  \item Consider the PDE 
    \begin{equation}
      \label{eq:3pdeg1}
      \begin{pmatrix} u\\v \end{pmatrix}_t+\tfr{\eps^2}
      \begin{pmatrix}
        1&0\\0&0
      \end{pmatrix}
 \begin{pmatrix} u\\v \end{pmatrix}_x
+\tfr\eps
      \begin{pmatrix}
        0&1\\1&0
      \end{pmatrix}
 \begin{pmatrix} u\\v \end{pmatrix}_y=0.
    \end{equation}
The relationship $\del=\eps^2$ does not satisfy \eqref{eq:deleps} in dimension two. Nevertheless, as
noted in the introduction, the fact that the coefficient matrix of the time derivatives does not
depend on $u$ or $v$ implies that 
solutions of \eqref{eq:3pdeg1} satisfy uniform bounds.
Let $f(x,y)$ be a function whose gradient belongs to $H^3$, and take the initial data to be
$u(0,x,y)=u_0(x,y)\eqdef -\eps f_y$ and $v(0,x,y)=v_0(x,y)\eqdef f_x$. Then $u_t(0,x,y)=0$ and
$v_t(0,x,y)=f_{yy}$, i.e., the initial time derivative is bounded. Since the PDE is linear with
constant coefficients, it is convenient to express the limit equation in Fourier space. By
part~\ref{item:1} of Example~\ref{eg:tjj}, when $k\ne0$ then the limit is
$\widehat U(t,k,\ell)=0$, $\widehat V_t-\frac{i\ell^2}{k}\widehat V=0$, while for $k=0$ but $\ell\ne0$ the
limit is $\widehat U(t,0,\ell)=0=\widehat V(t,0,\ell)$ and for $k=0=\ell$ the limit is $\widehat
U_t(t,0,0)=0=\widehat V_t(t,0,0)$. The initial data for the limit are $\widehat U(0,k,\ell)=0$ and
$\widehat V(0,k,\ell)=i k \widehat f(k,\ell)$. When $k$ and $\ell$ are both nonzero the solution of
the limit equation is $\widehat U(t,k,\ell)=0$ and
\begin{equation}\label{eq:limV}
\widehat V(t,k,\ell)=ik e^{i\frac{\ell^2}k t}\widehat f(k,\ell),
\end{equation}
while when $k=0$ then the limit is $\widehat U=0=\widehat V$. When the
spatial domain is $\RR^2$ then $\widehat{T_{\lim}}(k)=\frac{-i\ell^2}k$ is unbounded but when the
domain is $\TT^2$ then it is bounded since $|k|\ge c$ on the set where it is nonzero. Even when the
spatial domain is $\RR^2$, the fact that
$\widehat V(t,k,\ell)$ contains a factor of $k$ ensures that $\widehat V_t$ is bounded, but 
$\widehat V_{tt}$ will be unbounded if $\widehat f(0,0)\ne0$. 
The limit solution \eqref{eq:limV}, which implies the limit equation satisfied by $V$, can be
verified by solving the equation for $\widehat U$ and $\widehat V$ exactly for $k\ne0$. This yields
\begin{equation*}
\widehat V=ik\widehat f(k,\ell)
\tfrac{e^{i\frac{(-k+\sqkel)}{2\eps^2}t}(k+\sqkel) -e^{i\frac{(-k-\sqkel)}{2\eps^2}t}(k-\sqkel)}{2\sqkel}
+O(\eps)
\end{equation*}
whose limit as $\eps\to0$ indeed yields \eqref{eq:limV}.

  \item Adding the term $-\alp \lp\begin{smallmatrix}1&0\\0&1\end{smallmatrix}\rp
\lp\begin{smallmatrix}u\\v\end{smallmatrix}\rp_y$ to \eqref{eq:3pdeg1} changes the limit equation
for $V$ to $\widehat V_t-\frac{i\ell^2}{k-\alp\ell}\widehat V=0$. If $\alp$ is irrational but
well-approximated by rationals then the term $\widehat{T_{\lim}}(k)=\frac{-i\ell^2}{k-\alp\ell}$
may not be bounded by $(|k|+|\ell|)^3$ as that expression tends to infinity, even in the
periodic case, so $T_{\lim}$ may not be a bounded operator from $H^3$ to $L^2$.


\item Consider the PDE $u_t+u_x+\tfr{\del}\mathcal L u+\tfr\eps \mathcal Mu=0$, where $\del=\eps^{3/2}$
  and $\mathcal L$ and $\mathcal M$ are the matrices discussed in Part~\ref{item:2} of
Example~\ref{eg:tjj}. Since the choice of the relationship between $\del$ and $\eps$ makes $s$ in
 \eqref{eq:delepsrel1} equal two and hence $p$ in Definition~\ref{def:proj} equal three, the
 projection $\PP$ is orthogonal to the non-zero eigenspace of $T^{(2,2)}$ as well as those of
 $T^{(0,0)}$ and $T^{(1,1)}$. The formula for
 $T^{(2,2)}$ in Part~\ref{item:2} of 
Example~\ref{eg:tjj} therefore shows that when $m\ne0$ and $ad-bc\ne0$ then the limit equation is simply $U=0$
while when $m\ne0$ but $ad-bc=0$  then the limit equation is that the first three components of
$U$ vanish and $\prt_t+\prt_x$ of its last two components equal zero. This shows that even the number of
nonzero components of the limit cannot be determined simply by looking at the number of components that
do not contain large terms nor even by first eliminating  all components having terms of order $\fr{\del}$ and
then eliminating those remaining components having terms of order $\tfr\eps$ not coming from
components already eliminated, which works for the system \eqref{eq:3pdeg1}.

\end{enumerate}

\end{example}

\section{Acknowledgements}
Ju is supported by the NSFC (Grants No.11571046, 11471028, 11671225). 
Cheng and Ju are supported by the UK Royal Society ``International Exchanges'' scheme (Award
No. IE150886).
{\clrr Ju and Schochet are supported by the ISF-NSFC joint research program (NSFC Grant No. 11761141008 and ISF Grant No. 2519/17).
The authors thank an anonymous referee for comments that lead to improvements in the exposition of the paper.}

\bibliographystyle{alpha}

\begin{thebibliography}{Maj84}

\bibitem[BK82]{mr665380}
G.~Browning and H.-O. Kreiss.
\newblock Problems with different time scales for nonlinear partial
  differential equations.
\newblock {\em SIAM J. Appl. Math.}, 42(4):704--718, 1982.

\bibitem[Che14]{mr3284568}
B.~Cheng.
\newblock Improved accuracy of incompressible approximation of compressible
  {E}uler equations.
\newblock {\em SIAM J. Math. Anal.}, 46(6):3838--3864, 2014.

\bibitem[Fri76]{friedman76:pde}
A.~Friedman.
\newblock {\em Partial Differential Equations}.
\newblock Krieger, Huntington, NY, 1976.

\bibitem[Gal98]{MR1661025}
I.~Gallagher.
\newblock Applications of {S}chochet's methods to parabolic equations.
\newblock {\em J. Math. Pures Appl. (9)}, 77(10):989--1054, 1998.

\bibitem[Gre97]{mr1459589}
E.~Grenier.
\newblock Pseudo-differential energy estimates of singular perturbations.
\newblock {\em Comm. Pure Appl. Math.}, 50(9):821--865, 1997.

\bibitem[Kat82]{MR678094}
T.~Kato.
\newblock {\em A short introduction to perturbation theory for linear
  operators}.
\newblock Springer-Verlag, New York, 1982.


\bibitem[KM81]{klainerman81:singlim}
S.~Klainerman and A.~Majda.
\newblock Singular limits of quasilinear hyperbolic systems with large
  parameters and the incompressible limit of compressible fluids.
\newblock {\em Commun. Pure Appl. Math}, 34:481--524, 1981.

\bibitem[KM82]{klainerman82:compress}
S.~Klainerman and A.~Majda.
\newblock Compressible and incompressible fluids.
\newblock {\em Commun. Pure Appl. Math.}, 35:629--653, 1982.

\bibitem[Maj84]{Majdabook84}
A.~Majda.
\newblock {\em Compressible fluid flow and systems of conservation laws in
  several space variables}, volume~53 of {\em Applied Mathematical Sciences}.
\newblock Springer-Verlag, New York, 1984.

\bibitem[MK03]{majda03:_system_multis_model_tropic}
A.~Majda and R.~Klein.
\newblock Systematic multiscale models for the tropics.
\newblock {\em J. Atmos. Sci.}, 60(393--408), 2003.

\bibitem[MS01]{metivier01:euler}
G.~M{\'e}tivier and S.~Schochet.
\newblock The incompressible limit of the non-isentropic euler equations.
\newblock {\em Arch. Ration. Mech. Anal}, 158:61--90, 2001.

\bibitem[Sch88]{schochet88:asympt}
S.~Schochet.
\newblock Asymptotics for symmetric hyperbolic systems with a large parameter.
\newblock {\em J. Diff. Eq.}, 75:1--27, 1988.

\bibitem[Sch94]{scho94f}
S.~Schochet.
\newblock Fast singular limits of hyperbolic {PDE}s.
\newblock {\em J. Differential Equations}, 114(2):476--512, 1994.

\end{thebibliography}

\end{document}